\newcommand{\1}{{{\bf 1}}}
\newcommand{\id}{{\rm id}}
\newcommand{\End}{{\rm End}\,}
\newcommand{\Res}{{\rm Res}\,}
\newcommand{\w}{{\omega}}
\newcommand{\Z}{\mathbb{Z}}
\newcommand{\C}{\mathbb{C}}
\newcommand{\wt}[1]{{\rm wt \hskip .03in}#1}
\newtheorem{theorem}{Theorem}[section]
\newtheorem{proposition}[theorem]{Proposition}
\newtheorem{lemma}[theorem]{Lemma}
\theoremstyle{definition}
\newtheorem{definition}[theorem]{Definition}
\newtheorem{example}[theorem]{Example}
\newtheorem{remark}[theorem]{Remark}
\numberwithin{equation}{section}
\makeatletter \@addtoreset{equation}{section}
\begin{document}

\begin{center}
{\Large {\bf Zhu's Algebra of a $C_1$-cofinite Vertex Algebra}} \\
\vspace{0.5cm}

Lu Ding\\
Institute of Applied Mathematics,\\
Academy of Mathematics and Systems Science,\\
Chinese Academy of Sciences, Beijing 100190, China\\
Wei Jiang\\
Department of Mathematics,\\
Changshu Institute of Technology, Changshu 215500, China\\
Wei Zhang \\
School of Mathematics and Statistics  \\
Beijing Institute of Technology, Beijing 100080, China\\

\end{center}
\hspace{1.5 cm}
\begin{abstract}
For a $C_1$-cofinite vertex algebra $V$, we give an efficient way to calculate Zhu's algebra $A(V)$ of $V$ with respect to its $C_1$-generators and relations. We use two examples to explain how this method works.
\end{abstract}

\section{Introduction}
For a vertex algebra $V$, Zhu's algebra $A(V)$ is a powerful tool to study its representations \cite{Zhu1996}. But Zhu's algebra is hard to calculate directly. For example, in \cite{donlamtanyamyok2004,Wan1998}, representations of vertex algebras associated with $\mathcal{W}_3$ were studied for different central charges. In both papers, the generators of Zhu's algebras were given and some relations were calculated. In order to show the relations were enough, the authors used some extra algebraic structures to find some modules, then compared with the modules of the possible bigger algebras and got the Zhu's algebras. For some other vertex algebras like vertex algebras associated to even positive lattices \cite{don1993}, the representations were calculated without using Zhu's algebras at all since it seems very hard to find their Zhu's algebras. So how can we calculate Zhu's algebra directly? We think that the answer to this question is also important to the classification of rational vertex algebras.

In this paper, we give an method to solve this problem by using $C_1$-structures of vertex algebras. In \cite{KarLi1999}, the authors studied the generating spaces of a vertex algebra $V$ which play a role analogous to that of $\mathfrak{g}$ in $L(l,0)$ \cite{Frezhu1992}. If we pick a homogenous basis $\{u^1,\cdots, u^l\}$ of this generating space which is called a \emph{generating space of weak PBW type}\cite{KarLi1999}, we call these elements $u^1,\cdots, u^l$ \emph{$C_1$-generators}. In \cite{DesKac2005}, the authors say that the vertex algebra $V$ is \emph{strongly} generated by these elements.
In \cite{KarLi1999}, the authors showed that $V$ is spanned by
\begin{equation}\label{spanset}
u^{i_1}_{m_1}\cdots u^{i_k}_{m_k}\1,
\end{equation}
where $1\leq i_1,\cdots,i_k\leq l$, $k\in\Z_+$, $m_1\leq\cdots\leq m_k\leq -1$ and if $m_j=m_{j+1}$, then $i_j\geq i_{j+1}$.
So $u^i_mu^j$ can be written as the linear combinations of elements in (\ref{spanset}), for $1\leq i,j\leq l$ and $m\geq 0$. We call $u^i_mu^j=R(i,j,m)$ as \emph{$C_1$-relations} in \cite{Zha2014} where $R(i,j,m)$ is linear combinations of elements in (\ref{spanset}).  Furthermore we proved that we can construct a \emph{universal} vertex algebra $U$ from some abstract $C_1$-generators and $C_1$-relations, here universal means that any vertex algebra has the same $C_1$-generators and $C_1$-relations is a quotient vertex algebra of $U$. That means that there is another level of relations determined by an ideal $I$ of $U$, i.e. $V=U/I$.

We roughly explain what is $C_1$-structure. If $V$ is a $C_1$-cofinite vertex algebra \cite{KarLi1999}, $u^1,\cdots,u^l$ are its $C_1$-generators, then there are infinite many operators $u^i_m$ associated with $u^i$ for $1\leq i \leq l$ and $m\in\Z$. $V$ is just the perfect algebraic structure to express the relations among these operators. The most important relations are commutators $[u^i_m,u^j_n].$ But usually the commutators could not be given in the free associative algebra $A$ generated by these operators. The commutators could be given in the complete topological algebra $\bar{A}$ which is studied in \cite{Frezhu1992} and \cite{KarLi1999}. The commutator relations are $C_1$-relations. If we do not ask more relations, then we get a universal vertex algebra $U$. And $\mathcal{U}(U)$ equaling $\bar{A}$ modulo the commutator relations can be looked as the universal enveloping algebra of $U$. Then the category of admissible  $U$-modules is naturally equivalent to the category of $\Z_+$-graded $\mathcal{U}(U)$-modules.

Since the $C_1$-relations are commutator relations, naturally we ask whether the Jacobi identities
  $$[[u^i_l,u^j_m],u^k_n]+[[u^j_m,u^k_n],u^i_l]+[[u^k_n,u^i_l],u^j_m]=0$$
 hold or not in $\bar{A}$. If the Jacobi identities holds, then we have a PBW-like basis for $U$, otherwise we don't. We call the first kind of $C_1$-relations \emph{non-degenerate} and the other \emph{degenerate} \cite{DesKac2005,Zha2014}.

 We use $C_1$-structure to explain Zhu's algebra theory. $C_1$-relations give commutator relations among $u^i_m$ and make the space $T$ spanned by them look like a Lie algebra. So we can give a "triangular" decomposition of $T$ according to the weight of $u^i_m$:
\begin{equation}\label{tridec}
T=T_-\bigoplus T_0\bigoplus T_+,\end{equation}
where $T_-$ is spanned by $u^i_m$ for $1\leq i\leq l$ and $m>\wt u^i-1$, $T_0$ is spanned by $u^i_m$ for $1\leq i\leq l$ and $m=\wt u^i-1$, $T_+$ is spanned by $u^i_m$ for $1\leq i\leq l$ and $m<\wt u^i-1$. Any $T_0$-module $M$ which $T_-$ acts trivially on can be induced to a $T$-module. So "universal enveloping algebra" of $T_0$ is Zhu's algebra $A(U)$. If the $C_1$-relations are non-degenerate, then $A(U)$ has a PBW basis. If the $C_1$-relations are degenerate, then we need to find all other relations. For every element $v\in V$, the image of $v$ in $A(V)$ is just $v_{\wt v -1}$ which is written as a polynomial of $u^i_{\wt u^i-1}$.

For the second level of defining relations, i.e. an ideal $I$ of $U$,  we give a procedure to find $A(I)$. We can say that even we know the $C_1$-structure of a vertex algebra $V$, the calculation of Zhu's algebra $A(V)$ is still complex, but the procedure is quite straightforward, so we can use some computer program to do the calculations.

The paper is organized as following. In section 2 we analyse the $C_1$-structure of a vertex algebra. In section 3, we review Zhu' algebra theory and give an alternate way to do calculation for a $C_1$-cofinite vertex algebra. In section 4 and 5, we give our main results. Using Diamond Lemma, we give Zhu's algebra for non-degenerate and degenerate cases respectively. And we give two examples to explain our method.

\section{$C_1$-structure of a vertex algebra}

In this section, we review the abstract structure of a vertex algebra. In this paper we use standard notations as defined in \cite{Frelepmeu1988,Lepli2004} and all vector spaces are over complex field $\C$. Since the conformal vector $\w$ does not play any role in this paper, instead vertex operator algebras, We use $\Z_+$-graded vertex algebras defined as following:
\begin{definition} A \emph{$\Z_+$-graded vertex algebra} $V=(V,Y,\1)$ is
a $\Z_+$-graded vector space(graded by weights)
$$V=\bigoplus_{n\in \Z_+}V_n; \hskip .1in   {\rm for} \hskip .1in v\in V_n, \hskip .1in\wt{v}=n,$$
such that $$\dim V_0=1,$$
equipped with a  linear map $Y$ from $V$ to $({\rm
End}V)[[z,z^{-1}]]$, $$v\mapsto Y(v,z)=\sum_{n\in\Z}v_nz^{-n-1},
\hskip .1in $$where \hskip .1in $v_n\in \End V$
 and $u_mw\in V_{i+j-m-1}$ for $u\in V_i,\,\,w\in V_j$ and $m\in\Z$; and with a distinguished homogenous vector $\1\in V_0$ satisfying the following
conditions for $u,v\in V$:
$$Y({\bf 1},z)=\textrm{the identity operator on } V;$$
$$Y(u,z){\bf 1}\in V[[z]] \,\,\,\,\,and\,\,\, \,\, \lim_{z\rightarrow 0}Y(u,z){\bf 1}=u;$$
and the Jacobi identity:

\begin{eqnarray}\label{jacobiv}
&
&z_{0}^{-1}\delta\left(\frac{z_{1}-z_{2}}{z_{0}}\right)Y(u,z_{1})Y(v,z_{2})
-z_{0}^{-1}\delta\left(\frac{z_{2}-z_{1}}{-z_{0}}\right)Y(v,z_{2})Y(u,z_{1})
\nonumber\\
&
&=z_{2}^{-1}\delta\left(\frac{z_{1}-z_{0}}{z_{2}}\right)Y(Y(u,z_{0})v,z_{2}),
\end{eqnarray}
where $\delta(z)=\sum_{n\in\Z}z^n$ and  $(z_1+z_2)^n$ is defined
to be the formal power series
$\sum_{i=0}^{\infty}{n\choose i}z_1^{n-i}z_2^i$.  \\
\end{definition}
The definition of a vertex algebra is usually thought complex and hard to understand. We review the $C_1$-structure of vertex algebras \cite{KarLi1999,Zha2014}, and we will see that vertex algebras are not that difficult. The core of the definition, the Jacobi identity is equivalent to the following three formulas:
\begin{enumerate}
\item Commutator formula:

\begin{equation}\label{bracket}
[u_s,v_t]=u_sv_t-v_tu_s= \sum_{m\geq 0}{s\choose m}(u_mv)_{s+t-m};
\end{equation}

\item Iterate formula:

\begin{equation}\label{ass}
(u_nv)_m=\sum_{i\geq 0}{n\choose
i}\left((-1)^iu_{n-i}v_{m+i}-(-1)^{n+i}v_{n+m-i}u_i\right);
\end{equation}

\item Skew symmetry:  \begin{equation}\label{skewsym}
Y(u,z)v=e^{Dz}Y(v,-z)u,
\end{equation}
\end{enumerate}
where $u,v\in V$ and $s,t,m,n\in \Z$, $D$ is a linear map from $V$
to $V$ by mapping $v$ to $v_{-2}\1$.\\
We use the generators and their relations to see the roles that these formulas play. The following results can be found in \cite{KarLi1999,Zha2014}.

\begin{definition} For a vertex algebra $V$, set $V_+=\bigoplus_{n=1}^{\infty}V_n$. Define \emph{$C_1(V)$} to be the
subspace of $V$ linearly spanned by elements of type
$$u_{-1}v,\hskip .2in u_{-2}\1$$
for $u,v \in V_{+}$. If  $V/C_1(V)$  is finitely dimensional, then
we call $V$  \emph{$C_1$-cofinite}.

\end{definition}

\begin{proposition} Let
$U$ be a graded subspace of $V$ such that $V_{+}=U+C_1(V)$.
Then, as a vertex  algebra, $V$ is generated by $U$.
\end{proposition}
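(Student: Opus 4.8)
The plan is to prove, by induction on weight, that the vertex subalgebra $\langle U\rangle$ of $V$ generated by $U$ contains every homogeneous subspace $V_n$. Since $V_0=\C\1$ and $\1\in\langle U\rangle$ by the definition of a vertex subalgebra, this reduces at once to showing $V_+\subseteq\langle U\rangle$. Before starting the induction I would record the basic weight estimates for $C_1(V)$: from $u_m$ mapping $V_i$ into $V_{i+\wt u-m-1}$ one gets $\wt(u_{-1}v)=\wt u+\wt v$ and $\wt(u_{-2}\1)=\wt u+1$, and since $u,v\in V_+$ forces $\wt u,\wt v\geq 1$, this shows $C_1(V)\subseteq\bigoplus_{n\geq 2}V_n$. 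In particular $V_1=U_1\subseteq\langle U\rangle$, which is the base case.

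For the inductive step I would assume $V_m\subseteq\langle U\rangle$ for all $1\leq m<n$ and take a homogeneous $v\in V_n$. Writing $v\in V_+=U+C_1(V)$ and projecting onto $V_n$, I may assume $v=u+\sum_j (a_j)_{-1}b_j+\sum_k (c_k)_{-2}\1$ with $u\in U_n\subseteq\langle U\rangle$ and with each summand $(a_j)_{-1}b_j$, $(c_k)_{-2}\1$ homogeneous of weight $n$ and $a_j,b_j,c_k\in V_+$. (Here I use that $C_1(V)$ is a graded subspace, which follows by decomposing the spanning vectors into homogeneous components.) For a term $(a_j)_{-1}b_j$ we have $\wt a_j+\wt b_j=n$ with $\wt a_j,\wt b_j\geq 1$, hence both are $<n$, so $a_j,b_j\in\langle U\rangle$ by induction; as $\langle U\rangle$ is closed under all products $x,y\mapsto x_ky$, it contains $(a_j)_{-1}b_j$. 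For a term $(c_k)_{-2}\1$ we have $\wt c_k=n-1<n$, so $c_k\in\langle U\rangle$ and, again by closure under products (applied to $c_k$ and $\1$, i.e. $c_k\mapsto Dc_k$), $(c_k)_{-2}\1\in\langle U\rangle$. Hence $v\in\langle U\rangle$, completing the induction.

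I do not expect a genuine obstacle here: the whole content of the proposition is packed into the two weight identities $\wt(u_{-1}v)=\wt u+\wt v$ and $\wt(u_{-2}\1)=\wt u+1$, which guarantee that the $C_1(V)$-contributions to $V_n$ always involve only strictly lower weights and so can be absorbed into the induction. The only point requiring a line of care is the graded-ness of $C_1(V)$ (so that the reduction to homogeneous summands of weight exactly $n$ is legitimate); everything else is the routine observation that a vertex subalgebra is closed under $\1$ and under all the operations $x,y\mapsto x_ky$.
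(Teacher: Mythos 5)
Your argument is correct and complete: the grading estimates $\wt(u_{-1}v)=\wt u+\wt v$ and $\wt(u_{-2}\1)=\wt u+1$ (forcing $C_1(V)\subseteq\bigoplus_{n\geq 2}V_n$ and, in weight $n$, involving only factors of weight $<n$), together with the observation that $C_1(V)$ is graded and that a vertex subalgebra contains $\1$ and is closed under all products $x_ky$, make the induction on weight go through. Note, however, that the paper itself gives no proof of this proposition: it is quoted from \cite{KarLi1999} (with \cite{Zha2014}), where the result is established in the sharper form that the paper records as Proposition \ref{span}, namely that $V$ is \emph{spanned} by ordered monomials $u^{i_1}_{n_1}\cdots u^{i_r}_{n_r}\1$ with negative modes. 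The cited proof runs the same weight induction you use, but at each step it also rewrites $a_{-1}b$ and $c_{-2}\1$ back into such monomials (using the commutator and iterate formulas), so it buys the explicit PBW-type spanning set that the rest of the paper needs, whereas your more elementary version yields only generation as a vertex algebra — which is exactly what the statement asks, so there is no gap for the proposition as posed.
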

$U$ is called a generating space of weak $PBW$-type in \cite{KarLi1999}.
If we pick a homogeneous basis $\{u^1,u^2,\cdots\}$, then $V$ is
generated by this set, or $V$ is strongly generated by this set in \cite{DesKac2005}. We have the following definition as in \cite{Zha2014}.

\begin{definition} $u^1,u^2,\cdots$, the homogeneous basis vectors of $U$, are called \emph{$C_1$-generators.} \end{definition}

\begin{proposition}\label{span}
For a $C_1$-cofinite vertex algebra $V$ and $C_1$-generators $u^1,\cdots\,u^l$ of $V$, then $V$ is linearly spanned by elements
\begin{equation}\label{span1}
\ u^{i_1}_{n_1}\cdots u^{i_r}_{n_r}\1,
 \end{equation}
where $1\leq i_1,\cdots,i_r\leq l$; $n_1\leq \cdots \leq n_r<0$ and if $n_j=n_{j+1}$, then $i_j\leq i_{j+1}$.
\end{proposition}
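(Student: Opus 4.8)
The plan is to upgrade the spanning set from \eqref{spanset}, where the tie-breaking rule requires $i_j \geq i_{j+1}$ when $m_j = m_{j+1}$, to the spanning set \eqref{span1}, where it requires $i_j \leq i_{j+1}$. (Note also the cosmetic difference that \eqref{spanset} uses the bound $m_k \leq -1$ while \eqref{span1} uses $n_r < 0$; these are the same condition.) Since Proposition~\ref{span} asserts the same family of modes with only the ordering convention on equal indices reversed, the entire content is to show that among the vectors $u^{i_1}_{n_1}\cdots u^{i_r}_{n_r}\1$ with $n_1 \leq \cdots \leq n_r < 0$, those satisfying the ``increasing on ties'' rule already span $V$, given that those satisfying the ``decreasing on ties'' rule do (the latter being the cited result of \cite{KarLi1999}, restated in the introduction).

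First I would fix a block of consecutive equal modes, say $u^{i_1}_{n}\cdots u^{i_s}_{n}$ with a common $n < 0$, appearing inside a word from \eqref{spanset}, so that currently $i_1 \geq \cdots \geq i_s$. I want to reverse this to $i_1 \leq \cdots \leq i_s$. The tool is the commutator formula \eqref{bracket}: for two generators $u = u^i$, $v = u^j$ and the same mode $n$,
\[
[u^i_n, u^j_n] = \sum_{m \geq 0} \binom{n}{m} (u^i_m u^j)_{2n - m}.
\]
Transposing two adjacent factors $u^i_n u^j_n \mapsto u^j_n u^i_n$ therefore introduces a correction term $\sum_{m\geq 0}\binom{n}{m}(u^i_m u^j)_{2n-m}$. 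The key structural point is that each correction term is a single mode $(u^i_m u^j)_{2n-m}$ applied (in place of the two modes $u^i_n u^j_n$) to the rest of the word, and its mode index $2n - m \leq 2n < n$ is strictly more negative than $n$. Using the iterate formula \eqref{ass} one rewrites $(u^i_m u^j)_{2n-m}$ as a combination of products $u^i_? u^j_?$ of modes whose indices sum to $2n-m$ with at least one factor having index $\leq 2n-m$, hence the resulting words, after re-sorting their modes into weakly increasing order, have a strictly smaller ``minimum mode'' (or, more robustly, are smaller in a suitable monomial order) than the word we started from. So a standard induction applies.

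The induction should be run on a well-founded ordering of the words in \eqref{span1}: for instance, order first by the multiset of modes $\{n_1,\dots,n_r\}$ (two such multisets compared by, say, the value of the smallest mode, then the next, etc.\ — made well-founded by noting that modes are bounded above by $-1$ so only finitely many words of a given weight occur), and, for words with the same mode multiset, by some fixed measure of ``how far the index pattern is from being increasing on each tie-block,'' such as the number of inversions. Each adjacent transposition within a tie-block reduces the inversion count by one while producing only error terms that are strictly smaller in the mode ordering; iterating sorts every tie-block into increasing order at the cost of strictly-smaller words, which are handled by the outer induction hypothesis. The base case is a word all of whose modes are distinct, for which the two conventions impose no constraint and \eqref{span1} coincides with \eqref{spanset}.

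I expect the main obstacle to be purely bookkeeping: verifying that the error terms produced by \eqref{bracket} and then expanded by \eqref{ass} genuinely decrease in the chosen ordering, i.e.\ that re-sorting the modes of $(u^i_m u^j)_{2n-m}$ into weakly increasing order never produces a word that ties with or exceeds the original in the mode ordering. The one subtlety worth checking carefully is that these expansions stay within the span of vectors of the form \eqref{span1} (all modes negative, applied to $\1$) — this is exactly the content of Proposition~\ref{span} for the \emph{decreasing} convention, which we are allowed to assume, so after each rewriting step we may first normalize back into the cited spanning set and only then perform the next transposition. With that in hand the argument closes.
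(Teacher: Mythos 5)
You should first be aware that the paper itself does not prove Proposition \ref{span}: it is quoted as one of ``the following results [that] can be found in \cite{KarLi1999,Zha2014}'', with the introduction stating the same spanning set (\ref{spanset}) under the opposite tie-breaking convention. So your proposal supplies an argument where the paper supplies a citation. Granting, as you do, the \cite{KarLi1999} version with $i_j\geq i_{j+1}$ on ties, the version with $i_j\leq i_{j+1}$ follows in one line and none of your rewriting machinery is needed: the labelling of the $C_1$-generators is arbitrary (nothing in the quoted statement constrains the order of the homogeneous basis), so apply the quoted result to the relabelled generators $v^i:=u^{l+1-i}$ and translate back; the condition $j_a\geq j_{a+1}$ on ties for the $v$'s is exactly $i_a\leq i_{a+1}$ for the $u$'s, and $m_k\leq -1$ is the same as $n_r<0$. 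That single observation already yields Proposition \ref{span}.

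If you do want to run the commutator-rewriting induction (which is essentially the Karel--Li argument itself), your bookkeeping has a genuine soft spot. The claim that the iterate expansion of $(u^i_mu^j)_{2n-m}$ consists of products ``with at least one factor having index $\leq 2n-m$'' is false as stated: formula (\ref{ass}) with the outer index $m\geq 0$ gives terms $u^i_{m-i}u^j_{2n-m+i}$ and $u^j_{2n-i}u^i_i$ with $0\leq i\leq m$, so one factor always has \emph{nonnegative} index and the other has index $\leq 2n$ (not $\leq 2n-m$); in particular the resulting words are not of the form (\ref{span1}) and must be renormalized. More seriously, your induction measure (mode multiset, then inversion count) is never shown to be monotone under the very step you rely on, namely ``normalize back into the cited spanning set before the next transposition'': that normalization again moves nonnegative modes rightward and reorders factors, and you do not verify that it cannot return a word that fails to be smaller in your ordering. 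The measure that closes the argument cleanly is the paper's own \emph{formal length} $\wt u^{i_1}+\cdots+\wt u^{i_r}$: every correction term from (\ref{bracket}) involves $R(i,j,k)$, whose weight, hence formal length, is strictly less than $\wt u^i+\wt u^j$, and neither the iterate expansion nor the renormalization increases formal length, so an outer induction on formal length (within a fixed weight) with an inner induction on inversions inside tie-blocks finishes the proof. With that substitution your argument is sound, but the relabelling observation above makes it unnecessary for the statement at hand.
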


With the same notations as the above proposition, for $1\leq i,j\leq l$ and $k\geq 0$
\begin{equation}\label{length1} u^i_ku^j=R(i,j,k),
\end{equation}
where $R(i,j,k)$ is a linear combination of elements in (\ref{span1}). Plug the above formula into (\ref{bracket}), we get for $m,n\in\Z$
\begin{equation}\label{length2}[u^i_m,u^j_n]=\sum_{k\geq 0}{m\choose k}R(i,j,k)_{m+n-k}.
\end{equation}
Now we define \emph{formal length} for an element $u^{i_1}_{n_1}\cdots u^{i_r}_{n_r}\1\in V$ as $\wt u^{i_1}+\cdots+ \wt u^{i_r}$ and for an operator $u^{i_1}_{n_1}\cdots u^{i_r}_{n_r}$ as $\wt u^{i_1}+\cdots+ \wt u^{i_r}$ for $1\leq i_1,\cdots,i_r\leq l$ and $n_1,\cdots,n_r\in \Z$. For the linear combination, we define the formal length as
the maximal one. We can see the formal length of left hand side of (\ref{length1}) is bigger than that of right hand side, and when applying (\ref{ass}) in (\ref{length2}) the formal length of left hand side of (\ref{length2}) is bigger than that of right hand side although the right hand side may have infinite summands(the formal length still makes sense).

\begin{remark}For a $C_1$-cofinite vertex algebras $V$ and its $C_1$-generators $u^1,\cdots,u^l$, we think that the essence of $V$ is the relations of these operators $u^i_m$ for $1\leq i\leq l$ and $m\in\Z$, and the structure of vertex algebras are the right algebraic structure to realize these relations which are complex and non-classical. What are these relations? We think that the most important relations are commutator relations $[u^i_m,u^j_n]$.
\end{remark}

From above argument, we see that $R(i,j,k)$ for $1\leq i,j\leq l$ and $k\geq 0$ determine the commutator relations of $u^i_m,u^j_n$ and the commutators have smaller formal lengthes. By the skew-symmetry (\ref{skewsym}), there are some redundances, so we have the following definition with the same notation above\cite{Zha2014}.

\begin{definition}
For a $C_1$-cofinite vertex algebra $V$ and its $C_1$-generators $u^1,\cdots,u^l$, we call $$u^i_ku^j=R(i,j,k)$$  \emph{$C_1$-relations} with respect to these generators for $1\leq i< j\leq l, 0\leq k$ or $1\leq i=j\leq l, 0<k$ being odd.
\end{definition}
\begin{remark}Using skew-symmetry(\ref{skewsym}), we can get $u^i_ku^j$ for all $1\leq i,j\leq l$ and $k\geq 0$, so we also call all these $u^i_ku^j=R(i,j,k)$  as $C_1$-relations with respect to these $C_1$-generators.
\end{remark}
For example, the vertex algebra associated to Virasoro algebras\cite{Frezhu1992}, has only one $C_1$-generator $\w$ with weight being $2$, and $C_1$-relations are ${\w}_1\w=2\w$ and ${\w}_3\w=\frac{c}{2}$. Using skew symmetry (\ref{skewsym}), we can get ${\w}_2\w=0$ and ${\w}_0\w={\w}_{-2}\1$.

\begin{remark}\label{topring}If the $C_1$-relations $R(i,j,k)$ only contain linear terms like the above example, then the space $T$ spanned by $u^i_m$ for $1\leq i\leq l$ and $m\in\Z$, with commutator relations gotten from $C_1$-relations (\ref{bracket}), forms a Lie algebra. The $C_1$-relations can be realized in the free associative algebra $A$ generated by $u^i_m$ as an ideal $C$ and we get the universal enveloping algebra $U(T)=A/C$. But if the $C_1$-relations $R(i,j,k)$ contains non-linear terms like the examples in section 4 and 5, then $T$ is not a Lie algebra, and the $C_1$-relations can not be realized in $A$. In \cite{Frezhu1992,KarLi1999}, the authors introduced a complete topological algebra $\bar{A}$ as follows. Since $\wt u^i_m=\wt u^i-m-1$, so $A$ is a $\Z$-graded algebra and $A=\bigoplus_{k\in\Z} A_k$ where $A_k$ is the subspace consisting of the elements of weight $k$. Set $A_n^k=\sum_{i\leq k}A_{n-i}A_i$, then $A_n^k\subset A_n^{k+1}$ and $\cap_{k\in\Z}A^k_n=0, \cup_{k\in\Z}A^k_n=A_n$. Hence $\{A_n^k|k\in\Z\}$ form a fundamental neighborhood system of $A_n$. Denote by $\bar{A}_n$ its completion; then the direct sum $\bar{A}=\bigoplus_{n\in\Z}A_n$ is a complete topological algebra. We can realize the $C_1$-relations in $\bar{A}$ i.e. an ideal of $\bar{A}$ generated by $[u^i_m,u^j_n]-\sum_{k\geq 0}{m\choose k}R(i,j,k)_{m+n-k}$.
\end{remark}

We can reverse this process to construct a vertex algebra abstractly \cite{Zha2014}. Suppose that $u^1,\cdots,u^l$ are some abstract symbols and each has a positive integer weight denoted by $\wt u^i$. Then associate $u^i$ a series of symbols $u^i_m$ for $m\in\Z$, which also has weight as $\wt u^i_m=\wt u^i-m-1$. Let $A$ be the associative algebra generated by $u^i_m$ for $1\leq i\leq l$ and $m\in\Z$, then $A$ is $\Z$-graded. Let $B$ be the associative subalgebra of $A$ generated by $u^i_m$ for $1\leq i\leq l$ and $m<0$, then $B$ is $\Z_+$-graded. Let $A$ act on a vacuum vector $\1$ freely, we get a $\Z$-graded $A$-module $\bar{U}$ by setting the weight of $\1$ being $0$. Let $\bar{U'}$ be the subspace of $\bar{U}$ which is free $B$ module generated by $\1$. It is clear $\bar{U'}$ is $\Z_+$-graded. Now let $u^i=u^i_{-1}\1$ and associate $u^i$ a vertex operator $Y(u^i,z)=\sum _{m\in\Z}u^i_mz^{-m-1}$, and $Y(\1,z)=\id$ . Then use (\ref{ass}), we can associate every monomial $u$ in $\bar{U}$ a vertex operator $Y(u,z)=\sum  _{m\in\Z}u_mz^{-m-1}$, then linearly extend to the whole $\bar{U}$.

Now we can describe $C_1$-relations for these generators. Let $R(i,j,k)$ be any element in $\bar{U'}$, with weight being $\wt(u^i)+\wt(u^j)-k-1$ for $i\leq j$, $m\geq 0$ or $i=j$ $m\geq 0$ being odd. Then $u^i_ku^j=R(i,j,k)$ are $C_1$-relations for our generators.

Let $\bar{O}$ be a $A$-submodule of $\bar{U}$ generated by $\bar{U}_n$ for $n<0$ and $u^i_m\1$ for $m\geq 0$. Then all the operators $u_m$ defined above for $u\in \bar{U}$ make sense when acting on $U'=\bar{U}/\bar{O}$.  Apply the $C_1$-relations into $U'$.  Let $O$ be $A$-submodule of $U'$ generated by
$$([u^i_m,u^j_n]-\sum_{k\in\Z_+}{m\choose k}R(i,j,k)_{m+n-k})w,$$
where $w\in U', m,n\in\Z,1\leq i,j\leq l.$ Let $U=U'/O$, we have the following theorem in \cite{Zha2014}.
\begin{theorem}\label{abconva} $U$ is a vertex algebra with $C_1$-generators $u^1,\cdots, u^l$, satisfying the given $C_1$-relations.
\end{theorem}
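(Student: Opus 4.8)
The plan is to exhibit on $U$ exactly the data to which the reconstruction (``local systems'') theorem for vertex algebras applies (see \cite{Lepli2004}): the vacuum $\1$, a translation operator $D$, and the generating fields $Y(u^i,z)=\sum_{m\in\Z}u^i_mz^{-m-1}$. That theorem then produces a vertex algebra structure on $U$ whose vertex operators of the $u^i$ are exactly these fields, and the $C_1$-relations will either hold by construction or follow once skew symmetry \eqref{skewsym} is available. Concretely one has to check: (i) $U$ is $\Z_+$-graded with $\dim U_0=1$ and each $Y(u^i,z)$ is a field on $U$; (ii) $U$ is spanned by the vectors $u^{i_1}_{n_1}\cdots u^{i_r}_{n_r}\1$ with all $n_s<0$; (iii) $Y(u^i,z)\1\in U[[z]]$ with $\lim_{z\to0}Y(u^i,z)\1=u^i$; (iv) $D\1=0$ and $[D,Y(u^i,z)]=\partial_zY(u^i,z)$; (v) the fields $Y(u^1,z),\dots,Y(u^l,z)$ are mutually local.

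Items (i)--(iv) are the routine part. Since $\bar U$ is $\Z$-graded with $\wt\1=0$ and $\wt u^i_m=\wt u^i-m-1$, and $\bar O,O$ are generated by homogeneous elements with $\bar O\supseteq\bar U_n$ for $n<0$, the quotient $U$ is $\Z_+$-graded; as weights are bounded below, $u^i_mw=0$ for $m$ large, so each $Y(u^i,z)$ is a field. The spanning statement (ii) is obtained exactly as in Proposition \ref{span}, using only the iterate formula \eqref{ass} (valid on $U$ by the definition of the vertex operators attached to non-generators) together with $u^i_m\1=0$ for $m\geq0$ (as $u^i_m\1\in\bar O$) and $u^i_{-1}\1=u^i$; these last facts also give (iii) for the $u^i$, and (iii) for all of $U$ follows by induction on formal length via \eqref{ass}. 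For (iv) one sets $Dv=v_{-2}\1$, so $D\1=0$, and checks $[D,u^i_m]=-m\,u^i_{m-1}$ from \eqref{ass} applied to $u^i_{-2}\1$; again $[D,Y(u,z)]=\partial_zY(u,z)$ for all $u$ follows by induction. The equality $\dim U_0=1$ reduces to $\1\notin O$, which holds because the weight-$0$ relators in $O$ express the ``extra'' weight-$0$ monomials as multiples of $\1$ (using the finiteness below) rather than annihilating $\1$; this is a short check.

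The crux is (v). By construction the relation submodule $O$ imposes, as an identity of operators on $U$,
\[
[u^i_m,u^j_n]=\sum_{k\geq0}\binom{m}{k}R(i,j,k)_{m+n-k},\qquad 1\le i,j\le l,\ m,n\in\Z,
\]
and since $R(i,j,k)\in\bar U'$ with $\wt R(i,j,k)=\wt u^i+\wt u^j-k-1$ while $\bar U'$ is $\Z_+$-graded, we get $R(i,j,k)=0$ whenever $k\geq\wt u^i+\wt u^j$. Hence the sum is finite, and the standard dictionary between the commutator formula and the delta-function expansion turns it into
\[
(z_1-z_2)^{N}\,[\,Y(u^i,z_1),\,Y(u^j,z_2)\,]=0,\qquad N=\wt u^i+\wt u^j,
\]
i.e.\ mutual locality (locality against $Y(\1,z)=\id$ is trivial). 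With (i)--(v) in hand, the reconstruction theorem yields a vertex algebra $(U,Y,\1)$ with these generating fields; this $Y$ agrees with the one built in the construction, since both are determined on the $u^i$ and obey the same iterate/normal-ordering recursion on the spanning monomials of (ii), and by (ii) together with that recursion the $u^i$ are $C_1$-generators of $U$. Finally $u^i_ku^j=R(i,j,k)$ holds for $i\le j$ (resp.\ $i=j$ with $k$ odd) by construction, and for the remaining pairs by skew symmetry \eqref{skewsym}, now available. I expect the genuine difficulty to be step (v): verifying that the modes $u^i_m$ descend to $U$ and that $O$ is stable under every operator $u_m$ (so the displayed commutator identity really is an identity on $U$ and not merely on $U'$), and then that finiteness of the commutator sum does give locality of the formal distributions; the non-triviality point $\1\notin O$ is a smaller second thing to watch. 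As an alternative to invoking the reconstruction theorem, one may instead verify \eqref{bracket}, \eqref{ass} and \eqref{skewsym} directly for all elements by induction on formal length, starting from the generator case, and then quote their equivalence with the Jacobi identity \eqref{jacobiv} recorded above; this is the same bundle of inductions organised differently.
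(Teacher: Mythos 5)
The paper itself contains no proof of Theorem \ref{abconva}: the statement is quoted from \cite{Zha2014}, so your proposal can only be measured against that cited construction rather than against an argument in the text. As a strategy, your route is sound and standard: exhibit the vacuum, creation property, translation operator, the spanning set as in Proposition \ref{span}, and mutual locality of the generating fields, then invoke the reconstruction theorem of \cite{Lepli2004}. Your key observation is also correct: since $\bar{U'}$ is $\Z_+$-graded and $\wt R(i,j,k)=\wt u^i+\wt u^j-k-1$, one has $R(i,j,k)=0$ for $k\geq \wt u^i+\wt u^j$, so the imposed commutator relation is a finite delta-function expansion and does give locality of order $N=\wt u^i+\wt u^j$.

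However, as written the proposal has genuine gaps, concentrated exactly where you defer the work. First, the step you yourself call the crux --- that the modes $u_m$ of composite elements, defined through the iterate formula \eqref{ass}, are well defined (the infinite sums in \eqref{ass} truncate only after passing to $U'=\bar{U}/\bar{O}$) and that $O$ is stable under all of them, so that the commutator identity and the fields $Y(R(i,j,k),z)$ exist on $U$ at all --- is announced but not carried out; without it neither the locality claim nor the identification of the reconstructed $Y$ with the constructed operators is available. Second, dismissing $\1\notin O$ (equivalently $\dim U_0=1$, which the paper's definition of a $\Z_+$-graded vertex algebra requires) as ``a short check'' is not justified: elements of $O$ are linear combinations of relators, and when the $C_1$-relations are degenerate the leading monomials of such combinations cancel and produce new linear relations among irreducible monomials --- precisely the phenomenon analyzed in Section 5 --- so one must show that none of these induced relations reduces to a nonzero multiple of $\1$; your argument inspects single relators only and does not see these cancellations. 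Finally, two smaller points: $u^i_ku^j=R(i,j,k)$ does not hold ``by construction'' but follows from the imposed commutator relations together with the creation property, via $u^i_ku^j=[u^i_k,u^j_{-1}]\1=\sum_{s\geq 0}\binom{k}{s}R(i,j,s)_{k-1-s}\1=R(i,j,k)$; and the values $R(j,i,k)$ for $i<j$ and $R(i,i,k)$ for $k$ even, which already enter the relators defining $O$, must first be defined by the skew-symmetry expansion and then shown to agree with the skew symmetry \eqref{skewsym} of the resulting vertex algebra --- a consistency check your outline passes over.
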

$U$ is a universal vertex algebra, i.e. any vertex algebra which has the given $C_1$-generators and $C_1$-relations must be a quotient of $U$ modulo an ideal $I$. So $I$ is another level of relations which define the structure of vertex algebra $V$ with respect to the generators.

For the example above, if we start with one $C_1$-generator $\w$ and $C_1$-relations ${\w}_1\w=2\w$ and ${\w}_3\w=\frac{c}{2}$, we will get a universal vertex algebra $U$.  For $c=c_{p,q}=1-\frac{6(p-q)^2}{pq}$, where $p,q\in\{2,3,4,\cdots\}$, the vertex algebra $U$ has an ideal, and we get the minimal sires Virasoro vertex algebras\cite{Wan1993}.

$C_1$-relations determine the commutator relations among $u^i_m$ in $A$ or $\bar{A}$ for $1\leq i\leq l$ and $m\in \Z$, and these relations are enough to define a universal vertex algebra $U$. There is a natural question, Jacobi identities
\begin{equation}\label{jacobi1}[[u^i_l,u^j_m],u^k_n]+[[u^j_m,u^k_n],u^i_l]+[[u^k_n,u^i_l],u^j_m]=0\end{equation}
hold or fail? Hence there are two different kinds of $C_1$-relations, one is \emph{non-degenerate} if Jacobi identities hold, another is \emph{degenerate} if Jacobi identities fail.

In section 4 and 5, we will give two more examples of vertex algebras which are generated abstractly.
\section{Zhu's algebra theory}

In this section we review Zhu's algebra theory and study the relation between Zhu's algebra and $C_1$-structure. Let $V=(V,Y,1)$ be a $Z_+$-graded vertex algebra. In this paper, we only consider admissible $V$-module category.  An admissible $V$-module $M$ is a $\Z_+$-graded vector space
$$M=\bigoplus_{n\in\Z_+}M(n)$$
equipped with a linear map
\begin{eqnarray*}
& V & \rightarrow (\End M)[[z^{-1},z]],\\
& v  & \mapsto Y_M(v,z)=\sum_{n\in\Z}v_nz^{-n-1}\,(v_n\in\End M) for \,\,v\in V
\end{eqnarray*}
satisfying the following conditions for $u,v\in V,w\in M$:
$$Y_M(\1,z)=1;$$
\begin{eqnarray}\label{mjacobiv}
&
&z_{0}^{-1}\delta\left(\frac{z_{1}-z_{2}}{z_{0}}\right)Y_M(u,z_{1})Y_M(v,z_{2})
-z_{0}^{-1}\delta\left(\frac{z_{2}-z_{1}}{-z_{0}}\right)Y_M(v,z_{2})Y_M(u,z_{1})
\nonumber\\
&
&=z_{2}^{-1}\delta\left(\frac{z_{1}-z_{0}}{z_{2}}\right)Y_M(Y(u,z_{0})v,z_{2});
\end{eqnarray}
$$u_mM(n)\subseteqq M(r+n-m-1),$$ for $u\in V_r$.

Now we review Zhu's algebra theory \cite{Zhu1996}. The idea of Zhu's algebra is to study $M_0$ instead of $M$, since $M_0$ is quite smaller. Let $O(V)$ denote the linear span of element $u\circ v$ where for homogeneous $u\in V$ and $v\in V$,
$$u\circ v=\Res _z Y(u,z)v\frac{(1+z)^{\wt u}}{z^2}.$$
Define the linear space $A(V)$ to be the quotient $V/O(V)$. Define another product $\ast $ on $V$ for homogeneous $u\in V$ and $v\in V$,
\begin{equation}\label{zhuast} u\ast v=\Res _z Y(u,z)v\frac{(1+z)^{\wt u}}{z}.\end{equation}
and extend linearly on $V$.The multiplication $\ast$ induces the multiplication on the quotient $A(V)$ and is associative in $A(V)$. The image of the vacuum $\1$ in $A(V)$ becomes the identity element.

Denote by $o(a)=a_{\wt a-1}$ for homogeneous $a\in V$ and extends to the whole $V$ by linearity. For an admissible $V$-modules $M=\oplus_{n=0}^{\infty}M_n$, $o(a)$ preserves $M_0$ and we have the following theorems.
\begin{theorem}\label{zhuthe1}The identities
$$o(a)o(b)=o(a\ast b)$$
and $$o(c)=0$$
hold in $\End(M_0)$ for every $a,b\in V$ and $c\in O(V)$. Thus, the top level $M_0$ is an $A(V)$-module.
\end{theorem}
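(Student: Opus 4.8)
The plan is to follow Zhu's original argument, reducing both identities to the module versions of the commutator formula~(\ref{bracket}) and the iterate formula~(\ref{ass}) (consequences of the module Jacobi identity~(\ref{mjacobiv})), used in tandem with the one structural fact supplied by the $\Z_+$-grading of $M$: for homogeneous $u\in V$ and $w\in M_0$ one has $u_kw\in M(\wt u-1-k)$, so $u_kw=0$ whenever $k\ge\wt u$, while $u_{\wt u-1}w\in M_0$. By bilinearity it suffices to treat homogeneous $a,b$ (and homogeneous $u,v$ in the $O(V)$ statement); note that $o(a)o(b)=a_{\wt a-1}b_{\wt b-1}$ already maps $M_0$ into $M_0$.

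First I would show $o(c)=0$, for which it is enough to kill $o(u\circ v)$ on $M_0$. Expanding the residue gives $u\circ v=\sum_{\ell\ge 0}\binom{\wt u}{\ell}u_{\ell-2}v$, hence $o(u\circ v)=\sum_{\ell=0}^{\wt u}\binom{\wt u}{\ell}(u_{\ell-2}v)_{\wt u+\wt v-\ell}$; I expand each $(u_{\ell-2}v)_{\wt u+\wt v-\ell}$ by~(\ref{ass}) and apply it to $w\in M_0$. The grading does the work: in the family of terms $u_{\bullet}v_{\bullet}w$ the inner operator would have to raise the weight of $w$ to a nonnegative value, which forces an index $\ell>\wt u$, where $\binom{\wt u}{\ell}=0$, so that family disappears entirely, leaving only terms $v_{\wt u+\wt v-2-j}u_jw$ with $0\le j\le\wt u-1$. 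Collecting the coefficient of each such monomial one is left with sums of the form $\sum_{\ell=0}^{\wt u}(-1)^{\ell}\binom{\wt u}{\ell}\binom{\ell-2}{j}$, and since $\ell\mapsto\binom{\ell-2}{j}$ is a polynomial of degree $j<\wt u$, this is an order-$\wt u$ finite difference of a polynomial of degree $<\wt u$ and hence vanishes; thus $o(u\circ v)w=0$. (This is exactly the combinatorial cancellation that the definition of $O(V)$ is engineered to produce.)

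The identity $o(a)o(b)=o(a\ast b)$ on $M_0$ goes the same way. Writing $a\ast b=\sum_{\ell=0}^{\wt a}\binom{\wt a}{\ell}a_{\ell-1}b$, one has $o(a\ast b)=\sum_{\ell}\binom{\wt a}{\ell}(a_{\ell-1}b)_{\wt a+\wt b-\ell-1}$; expanding by~(\ref{ass}) and acting on $w\in M_0$, the grading argument now leaves, from the family $a_{\bullet}b_{\bullet}w$, a single surviving term, at $\ell=\wt a$ with the inner index pinned to the top mode, equal to $a_{\wt a-1}b_{\wt b-1}w=o(a)o(b)w$, while the remaining family $b_{\wt a+\wt b-2-j}a_jw$ ($0\le j\le\wt a-1$) reassembles into coefficients $\sum_{\ell=0}^{\wt a}(-1)^{\ell}\binom{\wt a}{\ell}\binom{\ell-1}{j}$ that again vanish by the finite-difference identity. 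Hence $o(a\ast b)w=o(a)o(b)w$. Finally, since $o(\1)=\1_{-1}=\id_M$ and $\ast$ descends to an associative product on $A(V)=V/O(V)$ (recalled above), the two identities say precisely that $a\mapsto o(a)|_{M_0}$ factors through an algebra homomorphism $A(V)\to\End(M_0)$ sending the class of $\1$ to the identity, i.e. $M_0$ is an $A(V)$-module.

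The one genuinely delicate point — though not a deep one — is the bookkeeping in the grading reduction: the iterate sum in~(\ref{ass}) is a priori infinite whenever the outer index makes $\ell-1$ or $\ell-2$ negative, so one must check carefully that only finitely many summands act nontrivially on $M_0$, and then track the generalized binomial coefficients and signs accurately enough to recognize the leftover coefficients as the stated finite differences.
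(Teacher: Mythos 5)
Your proposal is correct: the grading reduction of the iterate formula (\ref{ass}) on $M_0$, the identification of the single surviving term $a_{\wt a-1}b_{\wt b-1}w$ at $\ell=\wt a$, $i=0$ in the $\ast$-computation, and the finite-difference cancellation $\sum_{\ell=0}^{N}(-1)^{\ell}\binom{N}{\ell}P(\ell)=0$ for $\deg P<N$ (with $N=\wt u$ resp.\ $\wt a$ and $P(\ell)=\binom{\ell-2}{j}$ resp.\ $\binom{\ell-1}{j}$) are exactly what is needed, and you correctly handle the a priori infinite sums by noting that only finitely many modes act nontrivially on $M_0$. The paper itself states this theorem without proof, as part of its review of Zhu's algebra theory \cite{Zhu1996}, and your argument is essentially the standard proof of that cited result, so there is nothing further to reconcile.
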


By above theorem, we can also write the image of $a$ in $A(V)$ as $o(a)$ for $a\in V$.
\begin{theorem}There is a one-to-one correspondence between the set of irreducible $A(V)$-modules and the set of irreducible $V$-modules.
\end{theorem}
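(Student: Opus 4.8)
The plan is to exhibit mutually inverse maps between the two sets. In one direction, start from an irreducible admissible $V$-module $M=\bigoplus_{n\in\Z_+}M(n)$; re-grading if necessary we may assume $M(0)\neq 0$, and then the graded submodule generated by $M(0)$ is all of $M$ by irreducibility, so $M$ is generated by its top level. By Theorem~\ref{zhuthe1} the top level $M(0)$ is an $A(V)$-module via $a\mapsto o(a)$, and the first thing to prove is that this $A(V)$-module is irreducible. Let $0\neq W\subseteq M(0)$ be an $A(V)$-submodule and let $M'$ be the ($V$-)submodule of $M$ it generates; irreducibility forces $M'=M$, so $M'\cap M(0)=M(0)$. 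Now $M'$ is spanned by homogeneous vectors $a^{1}_{n_1}\cdots a^{r}_{n_r}w$ with the $a^i\in V$ homogeneous and $w\in W$, and such a vector lies in $M(0)$ exactly when $\sum_i(\wt a^i-n_i-1)=0$. The reduction is thus to a rewriting lemma: on $M(0)$ any such weight-preserving monomial $a^{1}_{n_1}\cdots a^{r}_{n_r}$ acts as $o(b)$ for some $b\in V$. Granting it, $M(0)=M'\cap M(0)\subseteq o(V)W=A(V)W\subseteq W\subseteq M(0)$, so $W=M(0)$ and $M(0)$ is irreducible over $A(V)$.

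The rewriting lemma is the main obstacle. I would prove it by induction on the length $r$ of the monomial, using two elementary facts about a homogeneous $a$ acting on $M(0)$: $a_k M(0)=0$ for $k>\wt a-1$ (the image would sit in a negative level), and $a_k M(0)\subseteq M(\wt a-1-k)$ otherwise. If every $n_i=\wt a^i-1$ the monomial equals $o(a^1)\cdots o(a^r)=o(a^1\ast\cdots\ast a^r)$ by Theorem~\ref{zhuthe1}; otherwise one pushes an offending factor toward $w$ using the commutator formula (\ref{bracket}), writing $[a^i_{n_i},a^{i+1}_{n_{i+1}}]=\sum_{m\ge0}\binom{n_i}{m}(a^i_m a^{i+1})_{n_i+n_{i+1}-m}$ and then expanding the iterate $(a^i_m a^{i+1})_{\bullet}$ by (\ref{ass}), which lowers the formal length; when a factor reaches $w\in W$ it either is of the form $o(\cdot)$ or it annihilates $w$. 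The delicate point is the binomial/degree bookkeeping and checking that the resulting $b$ is well defined modulo $O(V)$, so that $o(b)$ is unambiguous; this is exactly Zhu's lemma, and the rest of this direction is formal consequence of it.

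For the converse, given an irreducible $A(V)$-module $U$ I would build an admissible $V$-module with top $U$ by the generalized Verma (Weyl-module) construction: let $\bar M(U)$ be the universal $\Z_+$-graded $V$-module generated by $U$ in which $o(\cdot)$ realizes the given $A(V)$-action on the bottom level. Concretely $\bar M(U)$ is a quotient of a free object on symbols built from the operators $a^i_{n_i}$ applied to $U$, and one must verify that the induced $Y_M$ satisfies the Jacobi identity (\ref{mjacobiv}); this is a long but standard verification. The resulting module enjoys the universal property that any admissible $W$ generated by $W(0)$, together with an $A(V)$-morphism $U\to W(0)$, receives a unique $V$-morphism from $\bar M(U)$ extending it. Let $J\subseteq\bar M(U)$ be the largest graded submodule with $J\cap U=0$ (the sum of all such, which again meets $U$ trivially since everything is graded by weight), and set $L(U)=\bar M(U)/J$. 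Then $L(U)$ is admissible with $L(U)(0)=U\neq 0$, and since $U$ is $A(V)$-irreducible and $L(U)$ is generated by $L(U)(0)$, every nonzero graded submodule of $L(U)$ meets $L(U)(0)$ in a nonzero $A(V)$-submodule, hence in all of $U$, hence equals $L(U)$; so $L(U)$ is irreducible.

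Finally I would check the assignments $M\mapsto M(0)$ and $U\mapsto L(U)$ are mutually inverse on isomorphism classes. Given irreducible $M$, applying the universal property of $\bar M(M(0))$ to the inclusion $M(0)\hookrightarrow M$ gives a $V$-map $\bar M(M(0))\to M$ that is the identity on the bottom level and is surjective because $M$ is generated by $M(0)$; it kills $J$, hence factors through $L(M(0))$, and since $L(M(0))$ and $M$ are both irreducible with the same top level the induced map $L(M(0))\to M$ is an isomorphism. Conversely $L(U)(0)=U$ by construction, with precisely its given $A(V)$-structure, since $o(\cdot)$ on $\bar M(U)(0)$ was defined to be that structure. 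As both assignments are compatible with module isomorphisms they descend to mutually inverse bijections on isomorphism classes, proving the correspondence. The crux, as noted, is the rewriting lemma of the second paragraph — equivalently, the statement that the bottom level of the submodule generated by an $A(V)$-submodule $W\subseteq M(0)$ is again $W$; the construction of $\bar M(U)$ and the verification of (\ref{mjacobiv}) are routine but lengthy.
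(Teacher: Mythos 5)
The paper itself offers no proof of this theorem: it is quoted from \cite{Zhu1996} as part of the review in Section 3, so the only meaningful comparison is with the standard proof (Zhu's original argument and its refinements), whose architecture your outline does follow: the top-level functor $M\mapsto M(0)$ in one direction and a generalized Verma construction $U\mapsto L(U)=\bar M(U)/J$ in the other. The trouble is that the two points you defer are precisely the substance of the theorem, so as written the argument has genuine gaps rather than omitted routine checks. First, the ``rewriting lemma'' (every weight-preserving monomial $a^{1}_{n_1}\cdots a^{r}_{n_r}$ acts on $M(0)$ as $o(b)$ for a single $b\in V$) is used with ``granting it''; the induction you sketch (pushing offending factors along by (\ref{bracket}) and (\ref{ass})) is not obviously closing, since commuting produces modes of iterates $(a^i_m a^{i+1})_s$ which again need not be of the form $o(\cdot)$, and the well-definedness of $b$ modulo $O(V)$ is exactly Zhu's lemma, which takes real work; only the special case $o(a)o(b)=o(a\ast b)$ is available from Theorem \ref{zhuthe1}. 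Second, and more seriously, the converse direction presupposes the existence and nontriviality of $\bar M(U)$, a ``universal $\Z_+$-graded $V$-module generated by $U$ in which $o(\cdot)$ realizes the given $A(V)$-action.'' One cannot simply take a free object and ``verify (\ref{mjacobiv})'': imposing the Jacobi identity means passing to a quotient, and the entire content of the existence half of Zhu's theorem is that this quotient does not annihilate $U$, i.e.\ that every irreducible $A(V)$-module actually occurs as a top level. Without that, $L(U)$ could be zero, and the assignment $U\mapsto L(U)$ would fail to be defined, let alone inverse to $M\mapsto M(0)$.

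A smaller but real point: you define $J$ as the sum of all graded submodules meeting $U$ trivially and assert it again meets $U$ trivially ``since everything is graded by weight.'' That inference needs the bottom level of $\bar M(U)$ to be exactly $U$ (then $J\cap U=J(0)$, and degree-zero parts of graded submodules add), and establishing $\bar M(U)(0)=U$ is again essentially the spanning/rewriting statement you postponed. So your skeleton is the right one --- the same as in the literature the paper cites --- but the two deferred ingredients (Zhu's lemma and the construction of the universal admissible module with prescribed bottom level) constitute the proof rather than routine verifications, and until they are supplied the correspondence, in particular its surjectivity onto irreducible $A(V)$-modules, is not established.
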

\begin{proposition}\label{zhuideal}
If $I$ is an ideal of a vertex algebra $V$, then $A(I)$ the image of $I$ in $A(V)$, is a two-sided ideal of $A(V)$, and $A(V/I)$ is isomorphic to $A(V)/A(I)$.
\end{proposition}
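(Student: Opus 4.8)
The plan is to realize both $A(V/I)$ and $A(V)/A(I)$ as the single quotient space $V/(O(V)+I)$ equipped with the product induced by $\ast$, and then to observe that the resulting identification is an algebra isomorphism sending unit to unit.

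First I would recall the structural facts about the ideal $I$ that make the statement meaningful: $I$ is a graded subspace with $u_nI\subseteq I$ for all $u\in V$ and $n\in\Z$, and $DI\subseteq I$; skew-symmetry (\ref{skewsym}) then also gives $I_nu\subseteq I$, so $I$ is a two-sided ideal and $V/I$ is again a $\Z_+$-graded vertex algebra, with $Y$, the modes $u_n$, the weight grading and $D$ all descending along $\pi\colon V\to V/I$. In particular $A(V/I)$ is defined. I would then dispatch the first assertion: expanding (\ref{zhuast}) gives $u\ast v=\sum_{i\ge 0}\binom{\wt u}{i}u_{i-1}v$, a linear combination of the products $u_{i-1}v$; hence if $v\in I$ then $u\ast v\in I$, and if $u\in I$ then $u\ast v\in I$, using in each case that $I$ is two-sided. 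Therefore $A(I)=(I+O(V))/O(V)$, the image of $I$ in $A(V)$, is a subspace closed under $\ast$ on both sides, i.e. a two-sided ideal.

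The core of the argument is the identity $O(V/I)=\pi(O(V))$. Since $u\circ v=\sum_{i\ge 0}\binom{\wt u}{i}u_{i-2}v$ and $\wt$ together with all the modes descend to $V/I$, we have $\pi(u)\circ\pi(v)=\pi(u\circ v)$ for homogeneous $u,v$, so $O(V/I)$ is spanned by $\pi(O(V))$; since $\ker\pi=I$, this says precisely $O(V/I)=(O(V)+I)/I$. Consequently the composite $V\xrightarrow{\pi}V/I\to A(V/I)$ is surjective with kernel $O(V)+I$, inducing a linear isomorphism $V/(O(V)+I)\cong A(V/I)$. On the other hand $A(V)/A(I)=(V/O(V))/((I+O(V))/O(V))\cong V/(I+O(V))$ canonically. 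Composing these two identifications produces an isomorphism $A(V)/A(I)\to A(V/I)$ which on the level of $V/(O(V)+I)$ is the identity; it therefore automatically intertwines the two $\ast$-products, since in either algebra the product of the classes of $u$ and of $v$ is the class of $u\ast v$, and it maps the unit (the class of $\1$) to the unit.

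The computations involved are all routine. The only place demanding care is the well-definedness underlying $O(V/I)=\pi(O(V))$: one must know that the weight grading, the modes $u_n$ and the operator $D$ genuinely pass to $V/I$, which is exactly where the hypothesis that $I$ is a \emph{graded} and $D$-stable two-sided ideal — not merely a subspace absorbing products — is used. Granting those standard facts about ideals of (graded) vertex algebras, there is no substantial obstacle.
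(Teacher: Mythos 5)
Your argument is correct. The paper itself offers no proof of this proposition: it is quoted as part of the review of Zhu's theory in Section 3, so there is no internal argument to compare against, and what you give is the standard proof (as in Zhu's and Frenkel--Zhu's work): $u\ast v=\sum_{i\ge 0}\binom{\wt u}{i}u_{i-1}v$ and $u\circ v=\sum_{i\ge 0}\binom{\wt u}{i}u_{i-2}v$ lie in $I$ whenever one factor does, hence $A(I)$ is a two-sided ideal and $O(V/I)=(O(V)+I)/I$, identifying both $A(V/I)$ and $A(V)/A(I)$ with $V/(O(V)+I)$ compatibly with $\ast$ and the unit. The one point to keep explicit, which you already flag, is that $I$ must be graded (and $D$-stable, hence two-sided) so that homogeneous elements of $V/I$ admit homogeneous lifts of the same weight; in the paper's setting of $\Z_+$-graded vertex algebras without a conformal vector this is part of what an ideal is taken to be, whereas for vertex operator algebras it follows from $L(0)$-stability.
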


Zhu'algebra is a quite powerful tool to study representations of a vertex algebra, but it is very difficulty to calculate from its definition.  Now we study the relations between Zhu's algebra and $C_1$-structures of a $C_1$-cofinite vertex algebra $V$ with $C_1$-generators $u^1,u^2,\cdots,u^l$. $A(V)$ has the same generators as $C_1$-generators.
\begin{proposition}\label{avgenerator}For a $C_1$-cofinite vertex algebra $V$ and $C_1$-generators $u^1,\cdots\,u^l$ of $V$, $A(V)$ is generated by $o(u^1),o(u^2),\cdots,o(u^l)$. 
\end{proposition}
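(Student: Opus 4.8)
The plan is to combine the spanning set of Proposition~\ref{span} with the explicit formula for the $\ast$-product on $A(V)$ and an induction on formal length. By Proposition~\ref{span} the images $o\big(u^{i_1}_{n_1}\cdots u^{i_r}_{n_r}\1\big)$ with $1\le i_j\le l$ and $n_1\le\cdots\le n_r<0$ span $A(V)$ (recall the image of $a\in V$ in $A(V)$ is $o(a)$), so it suffices to show each of them lies in the subalgebra $B$ of $A(V)$ generated by $o(u^1),\dots,o(u^l)$. I would argue by induction on the formal length $N=\wt u^{i_1}+\cdots+\wt u^{i_r}$; the case $N=0$ is $o(\1)=1\in B$.

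For the inductive step I would first record, from the discussion following Proposition~\ref{span}, the auxiliary fact that any element $u^{j_1}_{m_1}\cdots u^{j_s}_{m_s}\1$ equals a linear combination of standard monomials of formal length $\le\wt u^{j_1}+\cdots+\wt u^{j_s}$, and of formal length strictly smaller than that when $m_1\ge0$ (sort the modes with \eqref{length2}, using that $R(i,j,k)$ has formal length $<\wt u^i+\wt u^j$, that \eqref{ass} does not raise formal length, and that $u^i_m\1=0$ for $m\ge0$). Now write the given monomial as $u^{i_1}_{n_1}b$ with $b=u^{i_2}_{n_2}\cdots u^{i_r}_{n_r}\1$, so $b$ is standard of formal length $N-\wt u^{i_1}$ and $o(b)\in B$ by induction. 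If $n_1=-1$, then \eqref{zhuast} gives $u^{i_1}\ast b=u^{i_1}_{-1}b+\sum_{k\ge1}\binom{\wt u^{i_1}}{k}u^{i_1}_{k-1}b$, whence
\[
o\big(u^{i_1}_{-1}b\big)=o(u^{i_1})\ast o(b)-\sum_{k\ge1}\binom{\wt u^{i_1}}{k}\,o\big(u^{i_1}_{k-1}b\big);
\]
the first term is in $B$, and by the auxiliary fact each $u^{i_1}_{k-1}b$ with $k\ge1$ (leading mode $\ge0$) is a combination of standard monomials of formal length $<N$, whose images lie in $B$ by induction. If $n_1\le-2$, I would invoke the standard identity $\Res_z Y(a,z)c\,(1+z)^{\wt a+n}/z^{n+2}\in O(V)$ for all $n\ge0$ (\cite{Zhu1996}); with $a=u^{i_1}$, $c=b$ and $n=-n_1-2$ it writes $o\big(u^{i_1}_{n_1}b\big)$ as a linear combination of the $o\big(u^{i_1}_{k+n_1}b\big)$, $k\ge1$. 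Rewriting each $u^{i_1}_{k+n_1}b$ in standard form by the auxiliary fact produces only terms of formal length $<N$ together with standard monomials of formal length $N$ whose modes sum to something strictly larger than $n_1+n_2+\cdots+n_r$ (because $k+n_1>n_1$), so a secondary downward induction on $-(n_1+\cdots+n_r)$ — with base case $n_1=-1$ — finishes the inductive step. Since the images $o\big(u^{i_1}_{n_1}\cdots u^{i_r}_{n_r}\1\big)$ span $A(V)$, it follows that $A(V)=B$.

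The step I expect to require the most care is the formal-length bookkeeping in the auxiliary fact: one must verify that replacing a pair of generator-modes by $R(i,j,k)$ and then re-expanding $R(i,j,k)_q$ via \eqref{ass} as a combination of products of modes $u^j_m$ never increases formal length, and that the two inductions used above — the outer one on $N$ and the inner one on $-(n_1+\cdots+n_r)$ — are well-founded; both rest on the fact that $R(i,j,k)$ strictly lowers formal length, which is exactly the observation already recorded after Proposition~\ref{span}.
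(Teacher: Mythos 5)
Your argument is correct and follows the same overall strategy as the paper's proof -- start from the spanning monomials of Proposition~\ref{span}, peel off the leftmost mode using the $\ast$-product/$O(V)$ identities, and induct on formal length -- but the technical route differs in two places. The paper rewrites $u^{i_1}_{n_1}\cdots u^{i_r}_{n_r}\1$ as $(u^{i_1}_{n_1}\1)_{-1}u^{i_2}_{n_2}\cdots u^{i_r}_{n_r}\1$ and applies (\ref{zhuast}) with the single vector $a=u^{i_1}_{n_1}\1$, using that $o(u^{i_1}_{n_1}\1)$ is a scalar multiple of $o(u^{i_1})$; you instead split into the cases $n_1=-1$ (plain $\ast$-product with $u^{i_1}$ itself) and $n_1\leq -2$, where you invoke the standard fact $\Res_z Y(a,z)c\,(1+z)^{\wt a+n}z^{-n-2}\in O(V)$ for $n\geq 0$ -- a correct special case of Zhu's residue lemma, so this substitution is legitimate even though it imports a lemma the paper does not use. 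The more substantive difference is the bookkeeping: in both treatments the correction terms are proportional to $u^{i_1}_{k+n_1}b$ with $k\geq 1$, and when $k+n_1<0$ these have the \emph{same} formal length as the original monomial, so the paper's assertion that all summands of (\ref{nb1}) have smaller formal length is not literally accurate as stated; your secondary induction on $-(n_1+\cdots+n_r)$, which strictly decreases for exactly those terms while everything else produced has strictly smaller formal length, is what makes the induction manifestly well-founded, and the same refinement applies verbatim to the paper's version. In short, your proof buys a fully explicit termination argument (double induction plus the auxiliary standardization fact, both justified by the formal-length discussion after Proposition~\ref{span}) at the cost of an extra case split and the citation of Zhu's lemma, whereas the paper's argument is shorter but relies on a formal-length claim that needs precisely your kind of care to be airtight.
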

\begin{proof} By Proposition \ref{span}, $V$ is spanned by elements $u^{i_1}_{n_1}\cdots u^{i_r}_{n_r}\1$ where $1\leq i_1,\cdots,i_r\leq l$; $n_1\leq \cdots \leq n_r<0$ and if $n_j=n_{j+1}$, then $i_j\leq i_{j+1}$.
We can rewrite $u^{i_1}_{n_1}\cdots u^{i_r}_{n_r}\1$ as $(u^{i_1}_{n_1}\1)_{-1}u^{i_2}_{n_2}\cdots u^{i_r}_{n_r}\1$. By (\ref{zhuast}) and Theorem \ref{zhuthe1},
\begin{align}\label{nb1}
o(u^{i_1}_{n_1}\cdots u^{i_r}_{n_r}\1)&=o((u^{i_1}_{n_1}\1)_{-1}u^{i_2}_{n_2}\cdots u^{i_r}_{n_r}\1)\notag\\
&=o({\Res} _zY(u^{i_1}_{n_1}\1,z)u^{i_2}_{n_2}\cdots u^{i_r}_{n_r}\1\frac{(1+z)^{\wt u^{i_1}-n_1-1}}{z})\notag\\
&-o(\sum_{k\geq 1}{\wt u^{i_1}-n_1-1 \choose k}(u^{i_1}_{n_1}\1)_{k-1}u^{i_2}_{n_2}\cdots u^{i_r}_{n_r}\1)\notag\\
&=o(u^{i_1}_{n_1}\1)o(u^{i_2}_{n_2}\cdots u^{i_r}_{n_r}\1)\notag\\
&-o(\sum_{k\geq 1}{\wt u^{i_1}-n_1-1 \choose k}(u^{i_1}_{n_1}\1)_{k-1}u^{i_2}_{n_2}\cdots u^{i_r}_{n_r}\1)\notag\\
&=(-1)^{\wt u^{i_1}+n_1}{n_1\choose \wt u^{i_1}-1}o(u^{i_1})o(u^{i_2}_{n_2}\cdots u^{i_r}_{n_r}\1)\notag\\
&-o(\sum_{k\geq 1}{\wt u^{i_1}-n_1-1 \choose k}(u^{i_1}_{n_1}\1)_{k-1}u^{i_2}_{n_2}\cdots u^{i_r}_{n_r}\1).
\end{align}
The summands of (\ref{nb1}) have smaller formal lengths, hence by induction on the formal length, we show that $A(V)$ is generated by $o(u^1),\cdots,o(u^l)$.\end{proof}

For a homogeneous vector $u\in V$, $o(u)$ can be written as sum of products of $o(u^1),\cdots,$\\$o(u^l)$ by above proposition. By (\ref{ass}),  $o(u)=u_{\wt u-1}$ can be expanded as sum of products of $u^i_m$ for $1\leq i\leq l$ and $m\in\Z$. Since $o(u)$ acts on $M_0$,  we can omit summands of $u_{\wt u-1}$ which are $0$ when acting on $M_0$. Clearly, there are only finitely many summands of $u_{\wt u-1}$ left. For a summand $u^{i_1}_{m_1}u^{i_2}_{m_2}\cdots u^{i_k}_{m_k}$, if $\wt u^{i_j}_{m_j}<\wt u^{i_{j+1}}_{m_{j+1}}$, then we can switch their orders by (\ref{bracket}) and $C_1$-relations. We can keep doing the above procedure until $u_{\wt u-1}$ is written as sum of products of $o(u^i)$. This give another way to calculate the image $o(u)$ of $u$ in $A(V)$ which is different to the method of definition of $A(V)$.

By the above proposition, to calculate $A(V)$ becomes finding all the relations among $o(u^1),\cdots,o(u^l) $.
In last section, we analyze the relations for the $C_1$-generators, and divide the total relations to two level, the first one is $C_1$-relations which will give a universal vertex algebra $U$, and the second one is some ideal $I$ of $U$. Hence we first calculate $A(U)$, then use Proposition \ref{zhuideal} to calculate $A(V)$.

Now we roughly introduce our idea to calculate $A(U)$. A representation of $U$ can be looked as a representation of  $u^i_m$ for $1\leq i\leq l$ and $m\in\Z$ which satisfy $C_1$-relations. $C_1$-relations can be looked as commutator relations among these operators, and we look the space spanned by these operators with their commutator relations as a Lie algebra $T$. Hence like infinite-dimensional Lie algebras, we can give a triangular decomposition of these operator, $$T=T_-\oplus T_0\oplus T_+,$$ where $T_-$ is spanned by negative weight operators, $T_0$ is spanned by weight $0$ operators and $T_+$ is spanned by positive weight operators. Like highest weight modules for Lie algebras, we can construct a trivial module for $T_-$, then induce it to a module for $T_-\oplus T_0$ and induce it to a module for $T$. Rigorously in mathematics, we can realize above idea as follows.  Let $A$ be the free associative algebra generated by $u^i_m$ for $1\leq i\leq l$ and $m\in\Z$. Obviously, $A$ is $\Z$-graded by weights. Now we can construct a free $A$-module $\tilde{M}$ generated by $\1$. Then $\tilde{M}$ is also $\Z$-graded if we set the weight of $\1$ as $0$. Let $N$ be the submodule of $\tilde{M}$ generated by negative weight elements in $\tilde{M}$, then we get a $A$-module $\bar{M}=\tilde{M}/N$ which is $\Z_+$-graded. Now we apply the $C_1$-relations in $\bar{M}$:
\begin{equation}
M=\bar{M}/R,
\end{equation}
where $R$ is a $A$-submodule of $\bar{M}$ generated by $([u^i_m,u^j_n]-\sum_{k\in\Z_+}{m\choose k}R(i,j,k)_{m+n-k})w$
for $w\in \bar{M}, m,n\in\Z,1\leq i,j\leq l.$ Since $R$ is homogeneous, $M$ is $\Z_+$-graded and is an admissible $U$-module. Now $M_0$ is an $A(U)$-module, and in our construction of $M$, we do not add any other relations, so $M_0$ is a free $A(V)$ module with only one generator $\1$, and as vector spaces $A(V)$ is isomorphic to $M_0$. So our question is to describe $M_0$. We will discuss it for degenerate case and non-degenerate case respectively. And we also will handle the second relations caused by an ideal of $U$.

\section{Non-degenerate Case}

Let $V$ be a $C_1$-cofinite vertex algebra generated by $C_1$-generators $u^1, u^2,\cdots, u^l$ with $\wt u^i>0.$ Suppose that $\wt u^i\geq \wt u^{i+1}$. These generators satisfy $C_1$-relations $u^i_mu^j=R(i,j,m)$, for $1\leq i, j\leq l,m\geq 0$. Suppose that these relations are non-degenerate, by theorem \ref{abconva}, we get a universal vertex algebra $U.$ $V$ is a quotient vertex algebra of $U$ by modulo an ideal $I$, which is usually generated by some singular vectors.

In section 2, we give the construction of $U$. The linear space $U'$ is quite clear and the following elements $$u^{i_1}_{n_1}\cdots u^{i_k}_{n_k}\1+\bar{O},$$
where $n_k<0$ and $\wt u^{i_j}_{n_j}\cdots u^{i_k}_{n_k}\1\geq 0$ for $1\leq j\leq k$ form a basis of $U'$. Apply the $C_1$-relations into $U'$.  Let $O$ be $A$-submodule of $U'$ generated by
$$([u^i_m,u^j_n]-\sum_{k\in\Z_+}{m\choose k}R(i,j,k)_{m+n-k})w,$$
where $w\in U', m,n\in\Z,1\leq i,j\leq l.$  $U=U'/O$. We use Diamond Lemma\cite{Ber1978} to get a basis of $U$.

Let $S$ be a set of pairs of the form $\sigma =(W_\sigma,f_\sigma)$, where $W_\sigma=u^i_{m}u^j_{n}\in A$ for $0>m>n,$ or $0>m=n$ and $i>j$, or $m\geq 0>n$, or $m,n\geq 0$ and $\wt u^i-m< \wt u^j-n$, or $m,n\geq 0$ and $\wt u^i-m= \wt u^j-n$ and $i>j$, and $f_\sigma=u^j_{n}u^i_m+\sum_{k\in\Z_+}{m\choose k}R(i,j,k)_{m+n-k}$. For any $\sigma\in S$ and $x,y\in A$ being monomials, let $r_{x\sigma y}$ denote linear morphism of $U'$ that fixes all basis elements of $U'$ other than $xW_\sigma y\1+\bar{O}$ and sends it to $xf_\sigma y\1+\bar{O}$. The map is well defined since the image of $\bar{O}$ belongs to $\bar{O}$. $S$ is called a \emph{reduction system}, and the linear maps $r_{x\sigma y}$ are called \emph{reductions}.

We say a reduction $r_{x\sigma y}$ acts \emph{trivially} on an element $a\in U'$ if the coefficient of $xW_\sigma y\1$ in $a$ is zero, and we shall call $a$ \emph{irreducible} (under $S$), if every reduction is trivial on $a$, i.e., if $a$ involves none of the monomials $xW_\sigma y\1$. The vector space of all irreducible elements of $U'$ will be denoted $U'_{irr}$.

Define a total ordering of $u^i_m$ by setting $u^i_m <u^j_n $ if $0>m>n,$ or $0>m=n$ and $i>j$, or $m\geq 0>n$, or $m,n\geq 0$ and $\wt u^i-m< \wt u^j-n$, or $m,n\geq 0$ and $\wt u^i-m= \wt u^j-n$ and $i>j$. Let us partially order $U'$ by setting $u<v$ if $\wt (u)<\wt (v)$ , or if $\wt(u)=\wt(v)$ and $u$ is of smaller formal length than $v$, or if $u$ is a permutation of the terms of $v$ but has less misorderings with respect to the total ordering defined above. It is obvious that $<$ is a semigroup partial ordering of $U'$ and has descending chain condition, i.e. there are only finitely many base elements $<$ any given element.

For any $\sigma\in S$, $f_\sigma<W_\sigma$, and we say $<$ is \emph{compatible} with $S$. Since $<$ has descending chain condition and is compatible with $S$, every element $w\in U'$ can be reduced to irreducible elements. We shall call an element $w\in U'$ by \emph{reduction-unique} if it is reduced to a common value, and which is denoted by $r_S(w)$.

Now we state the Diamond Lemma for our situation.
\begin{lemma}{\rm \textbf{(Diamond Lemma)}} $$U\cong U'_{irr}.$$
\end{lemma}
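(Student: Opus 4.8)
The plan is to deduce the statement directly from Bergman's Diamond Lemma \cite{Ber1978}, so the real work is checking its hypotheses in our setting. Recall that $U = U'/O$, where $O$ is the $A$-submodule of $U'$ generated by the elements $([u^i_m,u^j_n]-\sum_{k}{m\choose k}R(i,j,k)_{m+n-k})w$. First I would observe that, by the very definition of the reduction system $S$ and of the maps $r_{x\sigma y}$, the submodule $O$ is precisely the $\C$-span of all differences $xW_\sigma y\1 - xf_\sigma y\1$ with $\sigma\in S$ and $x,y$ monomials in $A$; indeed each generator of $O$ is, after moving the pair $u^i_mu^j_n$ into its ordered position, exactly such a difference (up to the action of $A$, which is absorbed by varying $x,y$). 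Hence $U' = U'_{irr}\oplus O$ as vector spaces will give $U = U'/O \cong U'_{irr}$, which is the content of the lemma; and this direct-sum decomposition is exactly the conclusion that the Diamond Lemma provides once we know all ambiguities are resolvable.

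The key steps are therefore: (1) verify that $<$ is a semigroup partial ordering of $U'$ with the descending chain condition and that it is compatible with $S$, i.e. $f_\sigma < W_\sigma$ for every $\sigma\in S$ — this is essentially done in the text, the point being that the leading (``straightened'') term $u^j_nu^i_m$ of $f_\sigma$ has one fewer misordering than $W_\sigma$ while having the same weight and formal length, and the correction terms $\sum_k {m\choose k}R(i,j,k)_{m+n-k}$ have strictly smaller formal length by the analysis of $(\ref{length1})$ and $(\ref{length2})$ in Section 2; (2) enumerate the overlap and inclusion ambiguities of $S$ — since every $W_\sigma$ is a word of length two $u^i_mu^j_n$, there are no inclusion ambiguities, and the overlap ambiguities are exactly the triples $u^i_mu^j_nu^k_p$ where both $u^i_mu^j_n$ and $u^j_nu^k_p$ are left sides of reductions; (3) show each such ambiguity is resolvable, i.e. the two ways of reducing $u^i_mu^j_nu^k_p$ lead to a common value.

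The hard part will be step (3), the resolvability of the overlap ambiguities $u^i_mu^j_nu^k_p$. This is where the non-degeneracy hypothesis enters decisively: reducing first the left pair and then straightening, versus first the right pair and then straightening, produces two expressions whose difference, after collecting terms, is governed by the Jacobi-type expression $[[u^i_m,u^j_n],u^k_p]+[[u^j_n,u^k_p],u^i_m]+[[u^k_p,u^i_m],u^j_n]$ together with lower-formal-length corrections coming from substituting the $C_1$-relations $R(\cdot,\cdot,\cdot)$ and applying the iterate formula $(\ref{ass})$. Because the $C_1$-relations are assumed non-degenerate, precisely this Jacobi identity $(\ref{jacobi1})$ holds in $\bar A$, so the obstruction vanishes and the ambiguity is resolvable; one then invokes the descending chain condition to run the usual induction on the partial order, reducing the general case to the minimal ambiguities. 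I would present the bookkeeping for (3) carefully but without grinding through every binomial coefficient, since the structural input — Jacobi identity plus strict drop in formal length of the correction terms — is what makes the confluence work. Once all ambiguities are resolvable, Bergman's theorem yields $U'=U'_{irr}\oplus O$, hence $U\cong U'_{irr}$, completing the proof.
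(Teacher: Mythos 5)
Your proposal is correct and follows essentially the same route as the paper: the paper likewise reduces everything to checking that the only ambiguities are the overlaps $u^i_su^j_mu^k_n$, resolves them by computing the two reduction orders and observing that their difference is exactly the Jacobi expression, which vanishes by non-degeneracy, and then uses the compatible partial order with the descending chain condition to get reduction-uniqueness and the decomposition $U'=U'_{irr}\oplus O$, hence $U\cong U'_{irr}$. The only cosmetic difference is that you invoke Bergman's theorem \cite{Ber1978} as a black box after verifying its hypotheses, while the paper writes out the confluence induction explicitly in its setting.
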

\begin{proof}First we show all elements of $U'$ are reduction-unique under $S$. It will be sufficient to prove all monomials $w\in U'$ reduction-unique. Since there is a semigroup partial ordering $<$ on $U'$ which is compatible with $S$, and has descending chain condition, we may assume inductively that all monomials $<w$ are reduction-unique. Now if two reductions $r_\sigma$ and $r_\tau$ act nontrivially on $w$, and $r_\sigma(w)\neq r_\tau(w)$, we will show $r_S(r_\sigma(w))=r_S(r_\tau(w))$. There are two different cases, according to the relative places of $W_\sigma$ and $W_\tau$ in $w$.

Case 1. $W_\sigma$ and $W_\tau$ overlap in $w$. $w$=$xu^i_su^j_mu^k_ny\1$  for $x,y\in A$ and $W_\sigma=u^i_su^j_m, W_\tau=u^j_mu^k_n,$ $W_\mu=u^i_su^k_n$. $r_\tau(r_\mu (r_\sigma(w)))=r_\tau(r_\mu(xu^j_mu^i_su^k_ny\1+xf_\sigma u^k_ny\1))=r_\tau(xu^j_mu^k_nu^i_sy\1+xu^j_mf_\mu y\1+xf_\sigma u^k_ny\1)=xu^k_nu^j_mu^i_sy\1+xf_\tau u^i_sy\1+xu^j_mf_\mu y\1+xf_\sigma u^k_ny\1$ and $r_\sigma(r_\mu (r_\tau(w)))=r_\sigma(r_\mu(xu^i_su^k_nu^j_my\1+xu^i_sf_\tau y\1))=r_\sigma(xu^k_nu^i_su^j_my\1+xf_\mu u^j_my\1+xu^i_sf_\tau y\1)=xu^k_nu^j_mu^i_sy\1+xu^i_sf_\tau y\1+xf_\mu u^j_my\1+xu^k_nf_\sigma y\1.$  Since $C_1$-relations are non-degenerate, $r_\tau(r_\mu (r_\sigma(w)))-r_\sigma(r_\mu (r_\tau(w)))=0.$  By assumption, $r_\tau(w)$ and $r_\sigma(w)$ are reduction-unique, so they reduce to the same element.

Case 2. $W_\sigma$ and $W_\tau$ are disjoint in $w$. Then $w=xW_\sigma yW_\tau z\1$ where $x,y,z\in A$. It is clear that $r_\sigma(r_\tau(w))=r_\tau (r_\sigma(w))$. By assumption, $r_\tau(w)$ and $r_\sigma(w)$ reduce to the same element.

So we show that all elements of $U'$ are reduction-unique under $S$. It is clear that $r_S(O)=0$. So $U'=U'_{irr}\oplus O$ and $U\cong U'_{irr} $.
\end{proof}

By the result of \cite{Zha2014}, $U$ is the university vertex algebra. So we have the following proposition. A similar result was given in \cite{DesKac2005}.
\begin{proposition} For given $C_1$-generators $u^1, u^2,\cdots, u^l$ and their $C_1$-relations $R(i,j,m)$, if these $C_1$-relations are non-degenerate, then there exists a universal vertex algebra $U$, with the following vectors being a basis of $U$
$$u^{i_1}_{-n_1}\cdots u^{i_k}_{-n_k}\1, $$
where $n_1\geq \cdots \geq n_k\geq 1$ and $i_j\leq i_{j+1}$ if $n_j=n_{j+1}$.
\end{proposition}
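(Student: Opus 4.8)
The plan is to read the basis off directly from the Diamond Lemma just proved. Since $U=U'/O$ and we have shown $U\cong U'_{irr}$, it suffices to determine which of the basis monomials of $U'$ are irreducible under the reduction system $S$. Recall that a basis of $U'$ is given by the monomials $u^{i_1}_{m_1}\cdots u^{i_k}_{m_k}\1$ with $m_k<0$ and $\wt u^{i_j}_{m_j}\cdots u^{i_k}_{m_k}\1\geq 0$ for all $j$, and that such a monomial is irreducible exactly when it contains no consecutive subword $u^{i_t}_{m_t}u^{i_{t+1}}_{m_{t+1}}$ equal to some $W_\sigma$ — equivalently, when $u^{i_t}_{m_t}\geq u^{i_{t+1}}_{m_{t+1}}$ in the total ordering for every $t$, i.e. the factors are non-increasing in that ordering.

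First I would use the fact that in the total ordering every $u^i_m$ with $m\geq 0$ is strictly smaller than every $u^j_n$ with $n<0$. Hence, in a monomial of the above form, a non-increasing chain $u^{i_1}_{m_1}\geq\cdots\geq u^{i_k}_{m_k}$ whose last term $u^{i_k}_{m_k}$ has a negative mode forces $m_1,\dots,m_k<0$. Writing $m_j=-n_j$ with $n_j\geq 1$, the ordering among negative modes reads $u^i_{-n}\geq u^j_{-n'}$ iff $n>n'$, or $n=n'$ and $i\leq j$; so the irreducibility condition translates precisely into $n_1\geq\cdots\geq n_k\geq 1$ together with $i_j\leq i_{j+1}$ whenever $n_j=n_{j+1}$. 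This is exactly the shape asserted in the statement.

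Next I would verify the converse: every monomial $u^{i_1}_{-n_1}\cdots u^{i_k}_{-n_k}\1$ with $n_1\geq\cdots\geq n_k\geq 1$ and $i_j\leq i_{j+1}$ when $n_j=n_{j+1}$ really is one of the listed basis elements of $U'$. The only point to check is the weight constraint $\wt u^{i_j}_{-n_j}\cdots u^{i_k}_{-n_k}\1\geq 0$; but $\wt u^i_{-n}=\wt u^i+n-1\geq\wt u^i\geq 1$ for $n\geq 1$ (using $\wt u^i>0$), so each such tail weight is a sum of positive integers and the constraint holds automatically. Combining the two directions shows that the listed monomials form a basis of $U'_{irr}$, and hence, via $U\cong U'_{irr}$, of $U$. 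The existence of $U$ as a vertex algebra is Theorem \ref{abconva}, and its universality is the result of \cite{Zha2014} cited above; thus only the basis statement needs the argument just outlined.

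The real work has already been absorbed into the Diamond Lemma, and that is where I expect the only genuine difficulty to sit: non-degeneracy of the $C_1$-relations is precisely what makes the overlap ambiguity resolve, i.e. what guarantees $r_\tau r_\mu r_\sigma(w)=r_\sigma r_\mu r_\tau(w)$ on $w=xu^i_su^j_mu^k_ny\1$; without it the set above would merely span, not be linearly independent. Granting that, the present proposition is pure bookkeeping, and the one place I would be careful is the interplay between the quotient by $\bar O$ (which forces the last mode to be negative, and with it, via the total ordering, all the others) and the sorting imposed by $S$ — in particular, checking that the secondary comparison by the labels comes out as $i_j\leq i_{j+1}$ rather than the reverse.
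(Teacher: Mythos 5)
Your proposal is correct and follows the paper's own route: the paper obtains this proposition directly from the Diamond Lemma ($U\cong U'_{irr}$), and your argument just makes explicit the bookkeeping that the irreducible monomials are exactly those of the stated form (all modes negative, non-increasing, with $i_j\leq i_{j+1}$ at ties), which is what the paper leaves implicit. The check that the weight constraint in the basis of $U'$ is automatic for such monomials is a correct and welcome detail, but not a departure from the paper's approach.
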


Now we calculate Zhu'algebra $A(U)$ of $U$, it is generated by $o(u^1),\cdots,o(u^l)$  by Proposition \ref{avgenerator}. These generators satisfy the obvious relations coming from the $C_1$-relations \begin{equation}\label{c1relationforzhu}
[o(u^i),o(u^j)]=\sum_{k\geq 0}{\wt (u^i)-1\choose k}o(R(i,j,k)).
\end{equation}
Hence $A(U)$ is linearly spanned by all elements
\begin{equation}\label{4basis1}
o(u^{i_1})\cdots o(u^{i_k}){\bf 1},
\end{equation} here $k\in\Z_+$ and $1\leq i_1\leq \cdots \leq i_{k}\leq l$. In fact these products are linearly independent.
\begin{proposition}\label{au}$A(U)$ has a basis consisting of the above elements.\end{proposition}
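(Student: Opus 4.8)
I would first record that the spanning half is already essentially in hand. Proposition~\ref{avgenerator} gives that $A(U)$ is generated by $o(u^1),\dots,o(u^l)$, and relation~(\ref{c1relationforzhu}) lets one rewrite $o(u^j)o(u^i)$ with $j>i$ as $o(u^i)o(u^j)$ plus a combination of terms $o(R(i,j,m))$; since each $R(i,j,m)$ has strictly smaller formal length and, again by Proposition~\ref{avgenerator}, each $o(R(i,j,m))$ is itself a polynomial in $o(u^1),\dots,o(u^l)$, an induction on formal length — the same one used in the proof of Proposition~\ref{avgenerator} — rewrites any element of $A(U)$ as a linear combination of the monomials~(\ref{4basis1}). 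So the real content of the proposition is the linear independence of these monomials.

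To prove independence, the plan is to test the monomials against the universal admissible $U$-module $M=\bar M/R$ constructed in Section~3. By Theorem~\ref{zhuthe1}, $o(c)$ acts as $0$ on $M_0$ for every $c\in O(U)$, so $o(a)\mapsto o(a)\1$ is a well-defined linear map $A(U)\to M_0$; it therefore suffices to show that the vectors
$$o(u^{i_1})\cdots o(u^{i_k})\1=u^{i_1}_{\wt u^{i_1}-1}\cdots u^{i_k}_{\wt u^{i_k}-1}\1,\qquad 1\le i_1\le\cdots\le i_k\le l,$$
are linearly independent in $M_0$. For this I would run a Diamond-Lemma argument for $M$ parallel to the one already carried out for $U$. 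With the total ordering of the $u^i_m$ and the induced partial ordering of monomials from Section~4, $\bar M$ has a basis of \emph{admissible} monomials $u^{j_1}_{p_1}\cdots u^{j_s}_{p_s}\1$ — those whose every right suffix $u^{j_r}_{p_r}\cdots u^{j_s}_{p_s}\1$ has nonnegative weight — and $M=\bar M/R$ is spanned by the \emph{reduced} ones among these, namely those nonincreasing in the total order, the reductions being the $C_1$-commutator relators of $R$, whose leading terms strictly decrease. Non-degeneracy of the $C_1$-relations makes every overlap ambiguity resolve by the very computation of Case~1 in the Diamond-Lemma proof above, now performed inside $M$; disjoint ambiguities resolve trivially. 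Hence the reduced admissible monomials form a basis of $M$.

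It remains to pin down the reduced admissible monomials of weight $0$. Writing $w_r=\wt u^{j_r}-p_r-1$ for the weight of $u^{j_r}_{p_r}$, the suffix condition together with total weight $0$ forces $w_1\le 0$. A creation operator ($p_1<0$) would give $w_1\ge\wt u^{j_1}\ge 1$; but creation operators are the largest elements in the total order, so any creation operator occurring in the monomial would have to sit in the leftmost slot, contradicting $w_1\le 0$. Hence every $p_r\ge 0$, and then $w_1$ is the maximum of the $w_r$, is $\le 0$, while $\sum_r w_r=0$, which forces $w_r=0$ for all $r$. So the reduced admissible weight-$0$ monomials are exactly $o(u^{j_1})\cdots o(u^{j_s})\1$ with $j_1\le\cdots\le j_s$ (the index tie-break in the ordering giving this inequality). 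Being part of a basis of $M$, they are linearly independent in $M_0$, and therefore the monomials~(\ref{4basis1}) are linearly independent in $A(U)$, completing the proof.

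The step I expect to be the main obstacle is the middle one: making the Diamond-Lemma bookkeeping rigorous for the \emph{module} $M$ rather than an algebra. One must check that the reductions are compatible with the relations cutting $\bar M$ out of the free module — i.e. that reducing never produces a summand violating the suffix condition — which holds because each $R(i,j,m)$ lies in $\bar U'$ and reductions strictly lower the formal length; and one must verify the overlap ambiguities, which is the module incarnation of the Jacobi identity~(\ref{jacobi1}) and is verbatim Case~1 above. An alternative packaging applies the Diamond Lemma directly to the presentation of $A(U)$ by the generators $o(u^1),\dots,o(u^l)$ and the relations~(\ref{c1relationforzhu}): there the only ambiguities are $o(u^k)o(u^j)o(u^i)$ with $k>j>i$, and they resolve because the bracket $[o(u^i),o(u^j)]=\sum_m\binom{\wt u^i-1}{m}o(R(i,j,m))$ inherits the Jacobi identity from non-degeneracy. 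But even then one still needs a module such as $M_0$ to rule out extra relations among the $o(u^i)$, so the two routes carry the same essential content.
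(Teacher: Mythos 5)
Your proposal is correct and follows essentially the same route as the paper: it uses the universal admissible module ($M$ of Section 3, which is the module $N$ constructed in the paper's proof), applies the Diamond Lemma there to show that the weight-zero reduced monomials (\ref{4basis2}) form a basis of its top level, and then concludes linear independence in $A(U)$ from the evaluation map $o(a)\mapsto o(a)\1$ onto that top level. Your explicit identification of the weight-zero reduced admissible monomials just fills in the step the paper summarizes as ``the same argument as Lemma 4.1.''
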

\begin{proof} First we construct an admissible $U$-module. Use the notations above. $\bar{N}$ be a free $A$-module with generator $\1$(here we abuse the symbol $\1$). Set the weight of $\1$ to be $0$, then $\bar{N}$ is $\Z$-graded $A$-module and $\bar{N}=\bigoplus_{k=-\infty}^{\infty}\bar{N}_k$. Let $\bar{N}'$ be the submodule of $\bar{N}$ generated by $\bar{N}_k$ for $k<0$. Let $\tilde{N}=\bar{N}/\bar{N}'$, so $\tilde{N}$ is a $\Z_+$-graded $A$-module.
Apply $C_1$-relations into $\tilde{N}$. Let $O$ be $A$-submodule of $\tilde{N}$ generated by $$([u^i_m,u^j_n]-\sum_{k\in\Z_+}{m\choose k}R(i,j,k)_{m+n-k})w$$ where $w\in \tilde{N}, m,n\in\Z,1\leq i,j\leq l.$
Let $N=\tilde{N}/O$. $N$ is a $Z_+$-graded $A$-module,$N=\bigoplus_{i\geq 0}N_i$, and the operator $u^i_m$ satisfy $C_1$-relations. Hence $N$ is an admissible $U$-module with one generator $\1$. We can use the Diamond Lemma and the same argument as Lemma 4.1 to find a basis of $N_0$ as following
\begin{equation}\label{4basis2}
u^{i_1}_{\wt u^{i_1}-1}\cdots u^{i_k}_{\wt u^{i_k}-1}\1,
\end{equation} here $k\in\Z_+$ and $1\leq i_1\leq \cdots \leq i_{k}\leq l$.

Since $N$ is an admissible $U$-module and generated by $\1$, $N_0$ is an $A(U)$-module and generated by $\1$. Hence $N_0$ is a quotient space of $A(U)$, compare (\ref{4basis1}) and (\ref{4basis2}), we get a basis of $A(U)$ consisting of $o(u^{i_1})\cdots o(u^{i_k})\1,$ where $k\in\Z_+$ and $1\leq i_1\leq \cdots\leq i_k\leq l$.
\end{proof}

Now we consider $A(V)$. Let $V=U/I$ be a quotient vertex algebra. Usually we describe $I$ by giving some singular vectors $a^1,\cdots, a^k$ of $U$, and $I$ is spanned by \begin{equation}\label{idealele}u^{i_1}_{n_1}\cdots u^{i_r}_{n_r}a^j_{-s}\1,\end{equation} where $1\leq i_1,\cdots,i_r\leq l$, $n_1\leq \cdots \leq n_r$, $1\leq j\leq k$ and $s\geq 1$. By Proposition \ref{zhuideal}, $A(V)=A(U)/A(I)$, hence we need to describe $A(I)$. $A(U)$ is generated by $o(u^i)$ for $1\leq i\leq l$, which satisfy $C_1$-relations (\ref{c1relationforzhu}). Hence we need to find the generators of $A(I)$ as $A(U)$'s ideal.

\begin{lemma}\label{lemmacalzhu}$o(u_{-m}v)=x_1o(u)o(v)+x_2o(u_0v)+x_3o(u_1v)+\cdots$ and $o(u_{-s}\1)=yo(u)$, where $u,v\in U$, $m>0$, and $x_i, y\in \C$.
\end{lemma}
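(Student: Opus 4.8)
The plan is to treat the two assertions separately and reduce both to Theorem~\ref{zhuthe1} together with two elementary residue identities; throughout I may assume $u,v$ homogeneous, the general case following by linearity on homogeneous components. The ingredients are: the identity $o(u\ast v)=o(u)o(v)$ in $A(U)$; the expansion $u\ast v=\Res_z Y(u,z)v\,(1+z)^{\wt u}z^{-1}=\sum_{k\ge 0}\binom{\wt u}{k}u_{k-1}v$; the analogous $u\circ v=\sum_{k\ge 0}\binom{\wt u}{k}u_{k-2}v\in O(U)$; and $Y(Du,z)=\frac{d}{dz}Y(u,z)$, i.e. $(Du)_n=-n\,u_{n-1}$. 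All sums in sight are finite because $u_kv\in U_{\wt u+\wt v-k-1}$ forces $k\le\wt u+\wt v-1$.

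\medskip
\noindent\emph{The second assertion.} For $s=1$ this is trivial since $u_{-1}\1=u$. In general $u_{-s}\1=\tfrac{1}{(s-1)!}D^{s-1}u$ and $Du=u_{-2}\1$. Computing inside $U$,
$$u\circ\1=\Res_z Y(u,z)\1\,\frac{(1+z)^{\wt u}}{z^2}=(\wt u)\,u+u_{-2}\1\ \in\ O(U),$$
so passing to $A(U)=U/O(U)$ gives $o(Dw)=-(\wt w)\,o(w)$ for every homogeneous $w$. Applying this repeatedly to $w=D^{j}u$ (homogeneous of weight $\wt u+j$) yields $o(D^{s-1}u)=(-1)^{s-1}(\wt u)(\wt u+1)\cdots(\wt u+s-2)\,o(u)$, hence $o(u_{-s}\1)=y\,o(u)$ with $y=\tfrac{(-1)^{s-1}}{(s-1)!}(\wt u)(\wt u+1)\cdots(\wt u+s-2)$.

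\medskip
\noindent\emph{The first assertion.} Induct on $m\ge 1$. For $m=1$, from $u\ast v=u_{-1}v+\sum_{k\ge 1}\binom{\wt u}{k}u_{k-1}v$ and $o(u\ast v)=o(u)o(v)$ we get
$$o(u_{-1}v)=o(u)o(v)-\binom{\wt u}{1}o(u_0v)-\binom{\wt u}{2}o(u_1v)-\cdots,$$
which has the required form. For $m\ge 2$, use $(Du)_{-(m-1)}=(m-1)\,u_{-m}$ to write $u_{-m}v=\tfrac{1}{m-1}(Du)_{-(m-1)}v$. Since $1\le m-1<m$ and $Du$ is homogeneous, the induction hypothesis applied to the pair $(Du,v)$ expresses $o((Du)_{-(m-1)}v)$ as a linear combination of $o(Du)o(v)$ and the terms $o((Du)_kv)$, $k\ge 0$. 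Now substitute $o(Du)=-(\wt u)\,o(u)$ (the $s=2$ case just proved) and $(Du)_kv=-k\,u_{k-1}v$ (so $o((Du)_0v)=0$ and $o((Du)_kv)=-k\,o(u_{k-1}v)$ for $k\ge 1$); this returns an expression of the claimed shape $x_1o(u)o(v)+x_2o(u_0v)+x_3o(u_1v)+\cdots$, completing the induction.

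\medskip
The argument is essentially bookkeeping on top of Zhu's theory, so there is no serious obstacle; the one point worth checking is that these expansions never generate a product of the $o(u^i)$ longer than the single factor $o(u)o(v)$. This holds because $o(u)o(v)$ enters only through $o(u\ast v)$ and is carried along unchanged by the $D$-reduction, while every remaining term is $o$ of a single vector of the form $u_jv$ with $j\ge 0$. One should also note there is no circularity: the second assertion is established directly from $u\circ\1$ without reference to the first, and the first invokes only the $s=2$ instance of the second.
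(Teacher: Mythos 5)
Your proof is correct: the residue expansions of $u\ast v$ and $u\circ v$, the identity $u\circ\1=(\wt u)\,u+u_{-2}\1$ giving $o(Dw)=-(\wt w)\,o(w)$, the formulas $u_{-s}\1=\tfrac{1}{(s-1)!}D^{s-1}u$ and $(Du)_n=-n\,u_{n-1}$, and the induction on $m$ all check out, and the resulting expression is exactly of the shape $x_1o(u)o(v)+x_2o(u_0v)+x_3o(u_1v)+\cdots$ claimed in the lemma. The paper itself gives no written argument (it only remarks that the lemma is "quite easy by the definition of Zhu's algebra or the method introduced after Proposition \ref{avgenerator}"), so your write-up fills in the first of those two routes. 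The only genuine difference from the most common way of making that route precise is organizational: the standard shortcut is to use that $\Res_z Y(u,z)v\,(1+z)^{\wt u}z^{-2-n}\in O(U)$ for all $n\geq 0$, which yields in one stroke a recursion expressing $o(u_{-2-n}v)$ through $o(u_{k}v)$ with larger $k$, terminating at the $u\ast v$ identity; you instead thread the reduction through the translation operator $D$, using only $u\circ\1\in O(U)$ together with $(Du)_n=-nu_{n-1}$ and $Y(u,z)\1=e^{zD}u$. What your version buys is self-containedness inside the paper's framework: every ingredient is derivable from the iterate formula, skew symmetry and the definition of $O(U)$ as stated in Sections 2--3, with no conformal vector (no $L_{-1},L_0$) needed, which fits the paper's $\Z_+$-graded vertex algebra setting; the cost is an extra induction and the separate treatment of $o(u_{-s}\1)$, which the residue-family argument handles uniformly. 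Either way, the coefficients $x_i,y$ depend on the weights of the homogeneous components, so your opening reduction to homogeneous $u,v$ is the right reading of the statement and is how the lemma is applied later to the (homogeneous) singular vectors.
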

The proof of above lemma is quite easy by the definition of Zhu's algebra or the method introduced after Proposition \ref{avgenerator}. By above lemma, we get $A(I)$ as ideal of $A(U)$ is generated by
\begin{equation}\label{generatorofai}
u^{i_1}_{n_1}\cdots u^{i_r}_{n_r}a^j,
\end{equation}
where $1\leq i_1,\cdots,i_r\leq l$, $0\leq n_1\leq \cdots \leq n_r$ and $1\leq j\leq k$.

It seems that there are infinitely many calculations, but by the following example we can see that the calculations will stop somewhere and this method is quite efficient.

\begin{example}We consider the vertex algebra related to Zamolodchikov $\mathcal{W}_3$ algebra \cite{Zam1985}, which is the simplest example of $\mathcal{W}$-algebras. Let the central charge be $-2$. In \cite{Wan1998}, the author had studied the representations of this vertex algebra by calculating Zhu's algebra, but he used some extra algebraic structure related to $\mathcal{W}_3$. Here we only use the abstract structure to calculate Zhu's algebra. The vertex algebra is generated by two $C_1$-generators $\omega,v$ with weight of $\omega$ being $2$ and weight of $v$ being $3$. Their $C_1$-relations are given by the following:
\begin{eqnarray*}
& &\w_1\w=2\w,\\
& &\w_3\w=-\1,\\
& &\w_0v=v_{-2}\1,\\
& &\w_1v=3v,\\
& &v_1v=\frac{8}{3}\w_{-1}\w-\w_{-3}\1,\\
& &v_3v=2\w,\\
& &v_5v=-\frac{2}{3}\1.\\
\end{eqnarray*}
Using skew symmetry formula, we can get
\begin{eqnarray*}
& &\w_0\w=\w_{-2}\1,\\
& &\w_2\w=0,\\
& &v_0v=\frac{8}{3}\w_{-2}\w-\frac{2}{3}\w_{-4}\1,\\
& &v_2v=\w_{-2}\1,\\
\end{eqnarray*}
$\widetilde{\mathcal{W}_3}$ is the universal vertex algebra generated by $\w$ and $v$. Direct calculations show that these $C_1$-relations are non-degenerate. Its Zhu's algebra $A(\widetilde{\mathcal{W}_3})$ is generated by $o(\w)$ and $o(v)$, which satisfy the relation $$[o(\w),o(v)]=[\w_1,v_2]=0,$$ so $A(\widetilde{\mathcal{W}_3})$ is the polynomial algebra $\C[o(\w),o(v)]$.

In \cite{Wan1998}, the author studied a quotient vertex operator algebra $\mathcal{W}_{3,-2}$ of $\widetilde{\mathcal{W}_3}$.  $\widetilde{\mathcal{W}_3}$ has an ideal $I$ generated by a singular vector:
$$v_s=\frac{3}{2}v_{-1}v-\frac{19}{36}\w_{-2}\w_{-2}\1-\frac{8}{9}\w_{-1}\w_{-1}\w-\frac{14}{9}\w_{-1}\w_{-3}\1+\frac{44}{9}\w_{-5}\1,$$
and $\mathcal{W}_{3,-2}=\widetilde{\mathcal{W}_3}/I$.

By Lemma \ref{lemmacalzhu}, to calculate $A(I)$ we need to calculate $o(v_{n_1}\cdots v_{n_l}\w_{m_1}\cdots \w_{m_k}v_s)$, where $0\leq n_1\leq \cdots\leq n_l$ and $0\leq m_1\leq \cdots\leq m_k$. Since we look $A(I)$ as an ideal of $A(\widetilde{\mathcal{W}_3})$, we only need to find the generators of $A(I)$. Now we have one generator $o(v_s)$. $\w_mv_s=0$ for $m\geq 2$. $\w _1v_s=6v_s$, so there is no necessity to calculate $\w_1\w_1 v_s$ and so on,  and we do not get a new generator of $A(I)$. $\w_0{v_s}={v_s} _{-2}\1$, by Lemma \ref{lemmacalzhu}, we do not get a new generator of $A(I)$ and there is no necessity to calculate $\w_0\w_0v_s$ and so on since by (\ref{bracket}) and Lemma \ref{lemmacalzhu}, these vectors will not give more new generators of $A(I)$. $v_mv_s=0$ for $m\geq 3$.
Let $$v_s'=\frac{9}{2}v_{-4}\1+9\w_{-2}v-6\w_{-1}v_{-2}\1.$$
By direct calculations, we get
$$v_2v_s=\frac{98}{27}v_s',$$
$$v_2v_s'=36v_s.$$
So we get a new generator $o(v_s')$ for $A(I)$. Now we need to handle $v_1v_s$ and $v_1v_s'$.
By direct calculations, we have
$$v_1v_s=\frac{49}{54}{v_s'}_{-2}\1 ,$$
$$v_1v_s'=9{v_s}_{-2}\1.$$
Hence there is no new generator and we need to handle $v_0v_s$ and $v_0v_s'$.

$$v_0v_s=\frac{8}{9}\w_{-1}v_s'+\frac{2}{27}{v_s'}_{-3}\1 ,$$
$$v_0v_s'=12{v_s}_{-3}\1-10(3v_{-2}v_{-2}\1+8\w_{-7}\1-8\w_{-5}\w-2\w_{-3}\w_{-3}\1-4\w_{-4}\w_{-2}\1-4\w_{-2}\w_{-2}\w).$$
By Lemma \ref{lemmacalzhu}, $o(v_0v_s)$ is not a new generator of $A(I)$ and $o(v_0v_s')$ is new.
\begin{align*}
&v_0v_0v_s'\\
=&12(v_0v_s)_{-3}\1\\
&+10(168v_{-8}1+96\w_{-6}v+32\w_{-5}v_{-2}\1+16\w_{-4}v_{-3}\1-24\w_{-3}v_{-4}\1+32\w_{-2}v_{-5}\1 \\
=&12(v_0v_s)_{-3}\1+\frac{32}{3}{v_s'}_{-5}\1+\frac{80}{3}\w_{-2}{v_s'}_2\1-\frac{160}{3}\w_{-3}v_s'.
\end{align*}
The calculations can stop here.

By direct calculation, we get $$o(v_s)=\frac{3}{2}o(v)^2-\frac{1}{9}o(\w)^2(8o(\w)+1),$$
$$o(v'_s)=0,$$
$$o(v_0v_s')=-18o(v)^2+\frac{4}{3}o(\w)^2(8o(\w)+1),$$

so we get $A(\mathcal{W}_{3,-2})=\C[o(\w),o(v)]/<\frac{3}{2}o(v)^2-\frac{1}{9}o(\w)^2(8o(\w)+1)>.$

\end{example}

\begin{remark}For general situations, we want to find the generators of $A(I)$ as ideal of $A(U)$. At beginning, we have generators $g^1=o(a^1),\cdots,g^k=o(a^k)$,  by Lemma \ref{lemmacalzhu}, we calculate the element in (\ref{generatorofai}) for $r=1,2,\cdots$. For $r=1$, if the element can be written as $u_{-n}g^i_{-m}\1$ or their sum, then this element is not a new generator, if not, then we have a new generator $g^{k+1}$. We keep doing this process until we could not get any new generators. We believe that there are only finite many generators like above example. The process looks complex but it is quite straightforward and efficient.
\end{remark}

\section{Degenerate Case}

In this section, we deal with degenerate cases. We use the same notations as last section. Let $V$ be a $C_1$-cofinite vertex algebra generated by $C_1$-generators $u^1, u^2,\cdots, u^l$ with $\wt u^i>0.$ These generators satisfy $C_1$-relations $u^i_mu^j=R(i,j,m)$, for $1\leq i, j\leq l,m\geq 0$. Suppose that these relations are degenerate, by theorem \ref{abconva}, we get a universal vertex algebra $U.$ $V$ is a quotient vertex algebra of $U$ by modulo an ideal $I$, which is usually generated by some singular vectors.

Since $C_1$-relations are degenerate and Jacobi identities (\ref{jacobi1}) fail, $U$ does not have a PBW basis like non-degenerate case (\ref{span}). We first analyze the linear relations among these vectors. We still use the reduction system. There are maybe two ways to reduce an element $u\in U'$, $r_{\sigma_n}\cdots r_{\sigma_1}(u)$ and $r_{\tau_m}\cdots r_{\tau_1}(u)$, to irreducible elements in $U'$, and their difference $r_{\sigma_n}\cdots r_{\sigma_1}(u)-r_{\tau_m}\cdots r_{\tau_1}(u)$ will give a linear relation in $U'_{irr}$. It is clear that all the linear relations in $U'_{irr}$ are caused by the failure of the uniqueness of reduction. We will show that all the linear relations in $U'_{irr}$ can be written as linear combinations of any irreducible images of the following element in $U'$ under reductions
\begin{equation}\label{reduction1}
x([[u^i_s,u^j_m],u^k_n]+[[u^j_m,u^k_n],u^i_s]+[[u^k_n,u^i_s],u^j_m])y\1,
\end{equation} where $1\leq i,j,k\leq l$, $s,m,n\in\Z$ and $x,y \in A$.

Since $U'$ is homogeneous and any reduction keeps the weight, we only need to consider the homogenous $u
\in U'$. If two reductions $r_\sigma$ and $r_\tau$ act nontrivially on $u$, and $r_\sigma (u)\neq r_\tau (u)$, then $r_\sigma (u)$ and $r_\tau (u)$ have smaller formal lengths than that of $u$. So there are two cases according to the relative places of $W_\sigma$ and $W_\tau$ in $u$.

Case 1. $W_\sigma$ and $W_\tau$ overlap in $u$. $u$=$xu^i_su^j_mu^k_ny\1$  for $x,y\in A$ and $W_\sigma=u^i_su^j_m, W_\tau=u^j_mu^k_n,$ $W_\mu=u^i_su^k_n$. $r_\tau(r_\mu (r_\sigma(u)))=r_\tau(r_\mu(xu^j_mu^i_su^k_ny\1+xf_\sigma u^k_ny\1))=r_\tau(xu^j_mu^k_nu^i_sy\1+xu^j_mf_\mu y\1+xf_\sigma u^k_ny\1)=xu^k_nu^j_mu^i_sy\1+xf_\tau u^i_sy\1+xu^j_mf_\mu y\1+xf_\sigma u^k_ny\1$ and $r_\sigma(r_\mu (r_\tau(u)))=r_\sigma(r_\mu(xu^i_su^k_nu^j_my\1+xu^i_sf_\tau y\1))=r_\sigma(xu^k_nu^i_su^j_my\1+xf_\mu u^j_my\1+xu^i_sf_\tau y\1)=xu^k_nu^j_mu^i_sy\1+xu^i_sf_\tau y\1+xf_\mu u^j_my\1+xu^k_nf_\sigma y\1$.  $r_\tau(r_\mu (r_\sigma(u)))-r_\sigma(r_\mu (r_\tau(u)))=x([[u^i_s,u^j_m],u^k_n]+[[u^j_m,u^k_n],u^i_s]\\+[[u^k_n,u^i_s],u^j_m])y\1$.

Case 2. $W_\sigma$ and $W_\tau$ are disjoint in $u$. Then $u=xW_\sigma yW_\tau z\1$ where $x,y,z\in A$. It is clear that $r_\sigma(r_\tau(u))=r_\tau (r_\sigma(u))$.  Suppose $r_1r_\tau(u)$, $r_2r_\sigma r_\tau(u)$, $r_3r_\sigma (u)$ and $r_4r_\tau r_\sigma (u)$ are irreducible, where $r_1,r_2,r_3,r_4$ are compositions of some reductions. $r_1r_\tau(u)-r_3r_\sigma (u)=(r_1r_\tau(u)-r_2r_\sigma r_\tau(u))+(r_2r_\sigma r_\tau(u)-r_4r_\tau r_\sigma (u))+(r_4r_\tau r_\sigma (u)-r_3r_\sigma (u))$. We know that $r_\sigma (u), r_\tau(u)$ and $r_\sigma r_\tau(u)=r_\tau r_\sigma(u)$ have smaller formal lengths than that of $u$, hence we can use induction on formal lengths to get that $r_1r_\tau(u)-r_3r_\sigma (u)$ can be written as linear combinations of element in (\ref{reduction1}).

In \cite{Zha2014}, we have the following lemma.
\begin{lemma}For a vertex algebra $W$, $u,v,w\in W$ and $l,m,n\in \Z$ $$[[u_l,v_m],w_n]+[[v_m,w_n],u_l]+[[w_n,u_l],v_m]$$ $$=\sum_{i,j\geq 0}{l\choose j}{m\choose
i}(v_iu_jw-u_jv_iw+[u_j,v_i]w)_{l+m+n-i-j}.$$
\end{lemma}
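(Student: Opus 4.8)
The plan is to compute the left-hand side directly using the commutator formula \eqref{bracket}, then reorganize the resulting sum to extract the bracketed expression $v_iu_jw-u_jv_iw+[u_j,v_i]w$. First I would apply \eqref{bracket} to the inner brackets: $[u_l,v_m]=\sum_{j\geq 0}\binom{l}{j}(u_jv)_{l+m-j}$, $[v_m,w_n]=\sum_{i\geq 0}\binom{m}{i}(v_iw)_{m+n-i}$, and $[w_n,u_l]=\sum_{j\geq 0}\binom{n}{j}(w_ju)_{n+l-j}$. Then apply \eqref{bracket} once more to each of the three outer brackets, so that each term becomes a triple sum of expressions of the form $((\cdot)_\bullet\,(\cdot))_\bullet$, i.e.\ an iterated mode of an iterated product.

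The main work is then a bookkeeping reorganization. I would use the iterate formula \eqref{ass} (equivalently, the associativity encoded in the Jacobi identity) to rewrite each $(\,(a_sb)_t\,c\,)_\bullet$-type term so that all three contributions are expressed in terms of the operators $a_s$, $b_t$ acting on $c$ in a uniform way, with a common outer mode $l+m+n-i-j$. The point is that the three cyclic contributions, once expanded, recombine: the $u_jv_i$-ordered pieces, the $v_iu_j$-ordered pieces, and the "correction" piece $[u_j,v_i]w$ (which itself comes from re-expanding a product of the form $(u_jv)_iw$ via \eqref{ass} and \eqref{bracket}) are exactly what survive after cancellation of the terms involving $w$ in the innermost slot. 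Keeping careful track of which binomial coefficients $\binom{l}{j}$, $\binom{m}{i}$, $\binom{n}{\cdot}$ attach to which term — and using the Vandermonde-type identity $\binom{n}{k}=\sum(-1)^{\ast}\binom{l}{\ast}\binom{m}{\ast}$ implicit in $n=-l-m-1$ when $l+m+n$ is balanced, or more precisely just regrouping without needing $n$ fixed — is the crux.

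The hard part will be controlling the rearrangement of the outer modes so that a single common mode $l+m+n-i-j$ emerges for every term; this is where \eqref{ass} must be invoked in the "right" direction and where sign errors are easiest to make. I expect the cleanest route is to not fix the relation among $l,m,n$ at all, but rather to prove the identity at the level of generating series: write everything in terms of $Y(\cdot,z)$, use the Jacobi identity \eqref{jacobiv} to handle the iterated vertex operators, and read off the coefficient. Once the three cyclic terms are brought to a common form, the claimed right-hand side $\sum_{i,j\geq 0}\binom{l}{j}\binom{m}{i}(v_iu_jw-u_jv_iw+[u_j,v_i]w)_{l+m+n-i-j}$ falls out by collecting like terms, and one notes the summation is finite since $\binom{l}{j}=0$ for $j$ large (when $l\geq 0$) and the modes of $u_j,v_i$ eventually annihilate $w$ otherwise.

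Since the statement is attributed to \cite{Zha2014}, an acceptable alternative in the paper is simply to cite that reference; but the self-contained argument above — expand both inner and outer brackets by \eqref{bracket}, normalize outer modes by \eqref{ass}, and collect — is the natural direct proof, and I would present it in that order, flagging the outer-mode normalization as the step deserving the most care.
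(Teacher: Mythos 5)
The paper itself gives no argument for this lemma (it is simply quoted from \cite{Zha2014}), so your proposal must stand on its own, and as written it has a genuine gap at precisely the point you call the crux. After the two applications of the commutator formula (\ref{bracket}) no normalization by the iterate formula (\ref{ass}) is needed at all: expanding the inner brackets and then the outer ones gives
$$[[u_l,v_m],w_n]=\sum_{i,j\ge 0}\binom{l}{j}\binom{l+m-j}{i}\bigl((u_jv)_iw\bigr)_{l+m+n-i-j},$$
$$[[v_m,w_n],u_l]=-\sum_{i,j\ge 0}\binom{m}{i}\binom{l}{j}\bigl(u_j(v_iw)\bigr)_{l+m+n-i-j},\qquad
[[w_n,u_l],v_m]=\sum_{i,j\ge 0}\binom{l}{j}\binom{m}{i}\bigl(v_i(u_jw)\bigr)_{l+m+n-i-j},$$
so every summand already carries the common outer mode $l+m+n-i-j$; invoking (\ref{ass}) to ``normalize outer modes'' addresses a problem that does not exist, and if you actually expanded $\bigl((u_jv)_iw\bigr)$ by (\ref{ass}) you would be driven toward the literal commutator $u_jv_iw-v_iu_jw$, at which point the right-hand side collapses to $0=0$ and the formula-level identity is lost. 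The step your plan omits is the only nontrivial one: proving
$$\sum_{i,j\ge 0}\binom{l}{j}\binom{l+m-j}{i}\bigl((u_jv)_iw\bigr)_{l+m+n-i-j}=\sum_{i,j\ge 0}\binom{l}{j}\binom{m}{i}\bigl([u_j,v_i]w\bigr)_{l+m+n-i-j},$$
which follows by expanding $[u_j,v_i]w=\sum_{k\ge 0}\binom{j}{k}(u_kv)_{i+j-k}w$ with (\ref{bracket}) once more and then using $\binom{l}{j}\binom{j}{k}=\binom{l}{k}\binom{l-k}{j-k}$ together with the Chu--Vandermonde convolution $\sum_{t}\binom{l-k}{t}\binom{m}{i-t}=\binom{l+m-k}{i}$. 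No relation such as $n=-l-m-1$ enters anywhere; the ``Vandermonde-type identity'' you gesture at is not the one required, and that confusion suggests the computation as you describe it would not close.

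A further caution about your suggested ``cleanest route'': proving the identity by generating series and the full Jacobi identity (or just observing that in a genuine vertex algebra each summand $v_iu_jw-u_jv_iw+[u_j,v_i]w$ vanishes and operator commutators satisfy Jacobi, so both sides are $0$) does verify the statement as literally worded, but it is useless for the purpose the lemma serves in Section 5, where the Jacobi identities (\ref{jacobi1}) are allowed to fail and only manipulations built from (\ref{bracket}) are legitimate reductions in $U'$. The value of the lemma lies in the fact that the derivation uses only the commutator formula and binomial identities, which is exactly the part your proposal leaves unexecuted.
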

Use above lemma, (\ref{reduction1}) can be written as
\begin{equation}\label{reduction2}x(u^i_su^j_mu^k-u^j_mu^i_su^k+[u^i_s,u^j_m]u^k)_{t}y\1,
\end{equation}
where $t\in\Z$. Apply skew symmetry (\ref{skewsym}) or commutator formula (\ref{bracket}) to (\ref{reduction2}),  we can get the following lemma,
\begin{lemma}\label{reduction3} The linear relations of $U'_{irr}$ can be written as the images under reduction of the following elements:
\begin{equation}\label{reduction3}x(u^i_su^j_mu^k-u^j_mu^i_su^k+[u^i_s,u^j_m]u^k)_{t}\1,
\end{equation} where $x\in A$, $t<-1$.
\end{lemma}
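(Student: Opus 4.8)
The plan is to peel the general element (\ref{reduction2}) down to the shape asserted in the lemma, using only the skew symmetry (\ref{skewsym}) and the commutator formula (\ref{bracket}) (equivalently (\ref{ass})), and then to control the left-over boundary by an induction on formal length. The input is already in hand: by the discussion of Cases~1 and~2 above, every linear relation holding in $U'_{irr}$ is a linear combination of reductions of the triple-commutator elements (\ref{reduction1}), and by the preceding lemma (from \cite{Zha2014}) each of those is a linear combination of the elements (\ref{reduction2}), namely of $x\,P_t\,(y\1)$ with $x,y\in A$, $t\in\Z$ and
\begin{equation*}
P=u^i_su^j_mu^k-u^j_mu^i_su^k+[u^i_s,u^j_m]u^k .
\end{equation*}
So it is enough to rewrite each $x\,P_t\,(y\1)$ as a combination of elements of the form displayed in the lemma, that is, to push $y$ down to the vacuum and the mode index below $-1$ while keeping $P$ intact.

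The first move removes $y$. Set $w=y\1\in U'$ and use skew symmetry (\ref{skewsym}) in its mode form,
\begin{equation*}
P_t\,w=\sum_{k\geq 0}\frac{(-1)^{t+k+1}}{k!}\,D^k\!\big(w_{t+k}\,P\big),
\end{equation*}
a finite sum by the weight grading. Expand each operator $w_{t+k}=(y\1)_{t+k}$ by repeated use of the iterate formula (\ref{ass}) into a finite sum of monomials in the generating modes $u^p_q$ acting on $P$; then move every $D$ to the left of those modes by means of $[D,u^p_q]=-q\,u^p_{q-1}$, and let whatever reaches $P$ be absorbed through $D^rP=r!\,P_{-r-1}\1$ and $D\1=0$. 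Keeping the factor $x$ on the left throughout, this exhibits $x\,P_t\,(y\1)$ as a linear combination of elements $x'\,P_{-r-1}\1$ with $x'\in A$ and $r\geq 0$; the cases $t\geq 0$ are handled by the same computation (there $P_t\1=0$, so it is the presence of $y$ that makes the term possibly nonzero). The terms with $r\geq 1$ already carry mode index $-r-1<-1$ with $P$ unchanged, so they are of exactly the form claimed in the lemma.

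The obstacle is therefore concentrated in the single boundary term $r=0$, which has the shape $x'P=x'\,P_{-1}\1$ and carries mode index exactly $-1$ rather than $<-1$. Here I would exploit that reduction strictly lowers the formal length of $P$: inside $P$ one of the two monomials $u^i_su^j_mu^k$, $u^j_mu^i_su^k$ is misordered, and replacing it by its reduct via (\ref{bracket}) cancels the two monomials of formal length $\wt u^i+\wt u^j+\wt u^k$ against each other and leaves only terms of strictly smaller formal length. One then runs an outer induction on formal length, with the descending-chain ordering introduced earlier supplying well-foundedness: the $r=0$ contributions are governed by relations of strictly smaller formal length, which the inductive hypothesis already presents in the stated form; alternatively one re-applies (\ref{bracket}) and the preceding lemma directly to $x'P$, trading it for a combination of elements $x''Q_{t''}\1$ with $Q$ again of the form $u^a_bu^c_du^e-u^c_du^a_bu^e+[u^a_b,u^c_d]u^e$ and $t''<-1$. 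The delicate point, and the part I expect to be the real work, is verifying that this recursion terminates and that no relation outside the form displayed in the lemma leaks out of the boundary terms; the descending chain condition on the formal-length ordering is precisely what makes this go through.
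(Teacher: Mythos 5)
Your overall route is the one the paper itself gestures at (its entire proof is the instruction to apply skew symmetry or the commutator formula to (\ref{reduction2})), and the main computation is sound: writing $P_t(y\1)=\sum_{k\ge 0}\frac{(-1)^{t+k+1}}{k!}D^k\bigl((y\1)_{t+k}P\bigr)$, expanding $(y\1)_{t+k}$ by (\ref{ass}), pushing $D$ to the left and absorbing it through $D^rP=r!\,P_{-r-1}\1$ does convert every element of (\ref{reduction2}) into a combination of elements $x'P_{-r-1}\1$ with $r\ge 0$. One caveat you should state: skew symmetry and $[D,u^p_q]=-q\,u^p_{q-1}$ are identities in the vertex algebra $U$, not in $U'$, so each rewriting is valid only modulo $O$, i.e.\ up to further reductions; those correction terms have to be fed back into the same induction rather than discarded.

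The genuine gap is the boundary term $r=0$, which is exactly what separates the lemma's strict inequality $t<-1$ from the weaker $t\le -1$ that your computation actually delivers. Your proposed repair, an induction on formal length (or "re-applying (\ref{bracket}) and the preceding lemma to $x'P$ to trade it for $t''<-1$ terms"), is not carried out and it is unclear that it can close: the induction has no base. At the lowest weight where the kernel of $U'_{irr}\to U$ is nonzero, the relations arise precisely as unshifted elements $Q_{-1}\1=Q$ with $Q$ a $C_1$-singular vector, and there is nothing of smaller formal length to defer to; in the paper's own Section 5 example the weight-$3$ relations are exactly the reductions of the weight-$3$ $C_1$-singular vectors themselves, i.e.\ $t=-1$ instances coming from the overlap in $e^{\alpha}_1e^{\alpha}_0e^{-\alpha}_{-1}\1$, with reduction $10(\alpha_{-1}e^{\alpha}-e^{\alpha}_{-2}\1)$. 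To cover such a relation by elements $x'Q'_t\1$ with $t<-1$ of the same weight one needs $\wt x'+\wt Q'$ small (forcing negative-weight modes into $x'$ or low-weight $Q'$), and neither you nor the paper shows that the reductions of those elements actually span it. Also, your assertion that one reduction inside $P$ "cancels the two monomials of maximal formal length, leaving only smaller terms" is only true for the maximal-formal-length parts of the two differently ordered evaluations; the nonzero remainder is precisely the $C_1$-singular vector, i.e.\ the very relation still to be accounted for, so the formal-length induction cannot simply absorb it. As it stands, your argument proves the statement with $t\le -1$; eliminating $t=-1$ (or conceding that the lemma should read $t\le -1$) requires an additional argument that is missing here, and, to be fair, is missing from the paper as well.
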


Now we compare (\ref{reduction3}) with (\ref{idealele}), we find that the kernel of the map from $U'_{irr}$ to $U$ and an ideal of an non-degenerate case are similar. So we give the following definition.
\begin{definition}For a $C_1$-cofinite vertex algebra $V$, $u^1,\cdots, u^l$ are its $C_1$-generators, and $R(i,j,m)$ are their $C_1$-relations, if the $C_1$-relations are degenerate, then we call the following elements
\begin{equation}u^i_su^j_mu^k-u^j_mu^i_su^k+[u^i_s,u^j_m]u^k
\end{equation}
as \emph{$C_1$-singular elements}, where $1\leq i, j,k\leq l$, $s,m\geq 0$.
\end{definition}
It is clear that there are only finite many $C_1$-singular elements and they quite like singular elements in non-degenerate case.

Now we study $A(U)$, and know that $A(U)$ is generated by $o(u^1),\cdots, o(u^l)$, so we need find all the relations of $o(u^1),\cdots, o(u^l).$ We have the obvious relations coming from $C_1$-relations $$[o(u^i),o(u^j)]=\sum_{k\geq 0}{\wt (u^i)-1\choose k}o(R(i,j,k)).$$
But there are some other relations caused by the failure of Jacobi identity.
First we construct an admissible $U$-module like non-degenerate case. We use the same notation as last section(Proposition \ref{au}). Similar to the results about $U$, we get linear relations in $\tilde{N}_{irr}$.
\begin{lemma}\label{lemmatildenirr}The linear relations of $\tilde{N}_{irr}$ can be written as the images under reduction of the following elements:
\begin{equation}\label{reduction4}x(u^i_su^j_mu^k-u^j_mu^i_su^k+[u^i_s,u^j_m]u^k)_{t}\1,
\end{equation} where $x\in A$.
\end{lemma}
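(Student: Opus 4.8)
The plan is to mirror, almost verbatim, the argument given for Lemma~\ref{reduction3} in the $U'$ setting, but now carried out inside the induced module $\tilde{N}$ rather than inside the universal vertex algebra $U'$. The key observation is that $\tilde{N}$ is constructed from the same reduction system $S$ restricted to act on the free cyclic $A$-module generated by $\1$ (with negative-weight elements killed), so the two cases analysed before — overlapping and disjoint leading monomials $W_\sigma, W_\tau$ — occur in exactly the same form. First I would set up the reduction system on $\tilde{N}$ by letting $r_{x\sigma y}$ act on the basis of $\tilde{N}_{irr}$ exactly as on $U'$, and observe that the semigroup partial ordering by weight and formal length and number of misorderings descends to $\tilde{N}$ and is still compatible with $S$ and still has the descending chain condition, so every element reduces to an irreducible one, though perhaps not uniquely.

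Next, I would run the ambiguity analysis. For a homogeneous $u \in \tilde{N}$ on which two reductions $r_\sigma, r_\tau$ act nontrivially with $r_\sigma(u)\neq r_\tau(u)$, there are two cases. In the overlap case $u = x u^i_s u^j_m u^k_n y\1$, the identical computation as in the proof preceding Lemma~\ref{reduction3} shows that the discrepancy between the two maximal reduction chains is exactly $x([[u^i_s,u^j_m],u^k_n]+[[u^j_m,u^k_n],u^i_s]+[[u^k_n,u^i_s],u^j_m])y\1$, i.e.\ an element of the shape (\ref{reduction1}). In the disjoint case $u = x W_\sigma y W_\tau z\1$ the two reductions commute, and by induction on formal length (since $r_\sigma(u), r_\tau(u), r_\sigma r_\tau(u)$ all have strictly smaller formal length) any difference of two irreducible images is a linear combination of irreducible images of elements of the form (\ref{reduction1}). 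Since every linear relation in $\tilde{N}_{irr}$ arises from failure of uniqueness of reduction, all such relations are spanned by reductions of elements (\ref{reduction1}).

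Then I would invoke the lemma of \cite{Zha2014} quoted just above to rewrite (\ref{reduction1}) as $x(u^i_s u^j_m u^k - u^j_m u^i_s u^k + [u^i_s,u^j_m]u^k)_t y\1$ for $t\in\Z$, and apply the commutator formula (\ref{bracket}) / skew-symmetry (\ref{skewsym}) to absorb the $y\1$ and adjust indices, reducing to elements $x(u^i_s u^j_m u^k - u^j_m u^i_s u^k + [u^i_s,u^j_m]u^k)_t\1$ with $x\in A$. The one genuine difference from Lemma~\ref{reduction3} is the absence of the constraint $t<-1$: in $U'$ one uses that $\bar O$ already kills the monomials with $t\geq -1$ coming from $u^i_m\1$ with $m\geq 0$, whereas in $\tilde{N}$ only negative-weight elements (not the positive-mode vectors $u^i_m\1$) are quotiented out, so the full range $t\in\Z$ survives. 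Hence the statement is as written, with no restriction on $t$.

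The main obstacle is bookkeeping rather than conceptual: one must be careful that the induced-module quotient $\tilde N = \bar N/\bar N'$ does not accidentally collapse or create relations beyond those tracked by the reduction system, and that the partial order and the notion of formal length behave correctly on $\tilde N$ (in particular that passing from $U'$ to $\tilde N$ does not destroy the descending chain condition needed for the induction in the disjoint case). Once this is checked, the overlap-case computation is a literal copy of the one already displayed in the text, and the rewriting via the $\mathcal{W}$-type triple-commutator lemma is routine, so I would keep the write-up short by referring back to the proof of Lemma~\ref{reduction3} for the shared steps and only highlighting where the $t$-range changes.
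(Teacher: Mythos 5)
Your proposal is correct and follows exactly the route the paper intends: the paper gives no separate proof of this lemma, saying only that it is ``similar to the results about $U$,'' i.e.\ one reruns the overlap/disjoint ambiguity analysis of the reduction system inside $\tilde{N}$ and rewrites the Jacobi defect via the quoted triple-commutator lemma, just as you do. Your added remark about why the restriction $t<-1$ disappears (only negative-weight elements, not the vectors $u^i_m\1$ with $m\geq 0$, are killed in $\tilde{N}$) correctly identifies the only point where the two settings differ.
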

Since reduction keeps the weight, we have the similar result for $\tilde{N}_{0irr}$.
\begin{lemma}\label{lemmatildenirr1}The linear relations of $\tilde{N}_{0irr}$ can be written as the images under reduction of the following elements:
\begin{equation}\label{reduction5}x(u^i_su^j_mu^k-u^j_mu^i_su^k+[u^i_s,u^j_m]u^k)_{t}\1,
\end{equation} where $x\in A$ and the weight is $0$.
\end{lemma}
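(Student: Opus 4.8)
The plan is to deduce Lemma~\ref{lemmatildenirr1} from Lemma~\ref{lemmatildenirr} by a grading argument, exactly as the remark preceding the statement indicates, rather than by redoing the Case~1/Case~2 analysis inside $\tilde N_0$. First I would record the elementary but essential fact that every reduction used in the construction of $N$ preserves weight. Indeed, for $\sigma\in S$ the monomial $W_\sigma=u^i_su^j_m$ and its replacement $f_\sigma=u^j_mu^i_s+\sum_{k\in\Z_+}\binom{s}{k}R(i,j,k)_{s+m-k}$ are homogeneous of the same weight: this is forced by the commutator formula (\ref{bracket}) together with $\wt u^i_m=\wt u^i-m-1$. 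Hence for any monomials $x,y\in A$ the vectors $xW_\sigma y\1$ and $xf_\sigma y\1$ lie in the same graded piece $\tilde N_t$, so each $r_{x\sigma y}$ is a graded endomorphism of $\tilde N$. Consequently the space of irreducible elements splits along the grading, $\tilde N_{irr}=\bigoplus_{t\ge 0}\bigl(\tilde N_{irr}\cap\tilde N_t\bigr)$, and in particular $\tilde N_{0irr}=\tilde N_{irr}\cap\tilde N_0$ is precisely the weight-$0$ component of $\tilde N_{irr}$.

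Next I would observe that the natural surjection $\tilde N_{irr}\to N$ is graded, so its kernel — that is, the space of linear relations in $\tilde N_{irr}$ — is a graded subspace, and its weight-$0$ component is exactly the space of linear relations in $\tilde N_{0irr}$. By Lemma~\ref{lemmatildenirr} this kernel is spanned by the reductions of the elements $x\bigl(u^i_su^j_mu^k-u^j_mu^i_su^k+[u^i_s,u^j_m]u^k\bigr)_{t}\1$. The point is that, for $x$ a monomial, each such element is already homogeneous: a direct weight count using (\ref{bracket}) shows that all three terms of $u^i_su^j_mu^k-u^j_mu^i_su^k+[u^i_s,u^j_m]u^k$ have weight $\wt u^i+\wt u^j+\wt u^k-s-m-2$, so this inner vector is homogeneous, and applying $(\,\cdot\,)_t$ and then the monomial $x$ shifts the weight only by a fixed integer. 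Therefore, writing a general $x\in A$ as a sum of monomials and discarding those for which the resulting element does not have weight $0$, we see that the relations in $\tilde N_{0irr}$ are spanned by reductions of exactly the weight-$0$ elements (\ref{reduction5}), which is the assertion.

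I do not anticipate a real obstacle here; the whole content is the bookkeeping that (i) the reductions are weight-homogeneous and (ii) the individual generators appearing in Lemma~\ref{lemmatildenirr} are themselves homogeneous, so that ``pass to the weight-$0$ part'' does not alter the shape of the generating set. Once these two points are spelled out, the lemma is immediate. (One could instead prove it from scratch by repeating the overlap/disjoint reduction analysis within the graded piece $\tilde N_0$, but that merely re-proves Lemma~\ref{lemmatildenirr} in a special case and is not needed.)
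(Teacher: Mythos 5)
Your proposal is correct and follows the same route as the paper, which disposes of this lemma with the single observation ``since reduction keeps the weight'' applied to Lemma~\ref{lemmatildenirr}; you have merely spelled out the bookkeeping (reductions are weight-homogeneous, the generating elements are homogeneous, so one may restrict to the weight-$0$ component). No gap --- your version is just a more explicit write-up of the paper's grading argument.
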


\begin{lemma} $N_0\simeq A(U)$ as vector spaces. \end{lemma}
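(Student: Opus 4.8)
The plan is to show that the obvious linear map $A(U)\to N_0$ is a bijection, obtaining injectivity from a one-sided inverse supplied by the universal property of the module $N$.

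First I would set up the map. Define $\phi\colon A(U)\to N_0$ by $\phi(\overline a)=o(a)\1$, where $\overline a$ denotes the class of $a\in U$ in $A(U)=U/O(U)$ and $o(a)=a_{\wt a-1}$ acts on the top level $N_0$ of the admissible $U$-module $N$. By Theorem \ref{zhuthe1}, $O(U)$ annihilates $N_0$, so $\phi$ is well defined, and the same theorem ($o(a)o(b)=o(a\ast b)$ on $N_0$, together with $o(\1)=\id$) makes $\phi$ a homomorphism of $A(U)$-modules, where $A(U)$ acts on itself by $\ast$ and on $N_0$ in Zhu's way. Since $N$ is, by construction, an admissible $U$-module generated by $\1$, its degree-zero part $N_0$ is spanned by the vectors $o(a)\1=\phi(\overline a)$ with $a\in U$; hence $\phi$ is surjective.

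For injectivity I would invoke that $N$ is the \emph{universal} admissible $U$-module generated by one weight-zero vector. This is immediate from the construction $N=\tilde N/O$: since $\tilde N$ is free over $A$ modulo negative weights, for any admissible $U$-module $M$ and any $v_0\in M_0$ there is a unique $U$-module map $N\to M$ sending $\1$ to $v_0$ — the point being that killing negative weights and imposing the $C_1$-relations is automatic in $M$, the operator identities $[u^i_m,u^j_n]=\sum_{k}\binom{m}{k}R(i,j,k)_{m+n-k}$ holding in $\End M$ by the commutator formula \eqref{bracket} and $u^i_ku^j=R(i,j,k)$. Apply this with $M$ the generalized Verma module $\bar M(A(U))$ of Zhu's theory \cite{Zhu1996}, an admissible $U$-module whose top level is the left regular $A(U)$-module $A(U)$, with cyclic weight-zero vector $1\in A(U)$: one gets a $U$-module surjection $N\to\bar M(A(U))$, $\1\mapsto1$, and, restricting to degree zero, a surjective $A(U)$-module map $\theta\colon N_0\to A(U)$ with $\theta(\1)=1$. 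Then $\theta\circ\phi\colon A(U)\to A(U)$ is an $A(U)$-module endomorphism of the left regular module with $(\theta\circ\phi)(1)=\theta(o(\1)\1)=\theta(\1)=1$, hence $\theta\circ\phi=\id$. Therefore $\phi$ is injective, and, being also surjective, it is an isomorphism $A(U)\cong N_0$ of $A(U)$-modules, in particular of vector spaces.

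The main obstacle is step two: spelling out carefully, from the explicit construction of $N$ given in this section, that $N$ indeed has the stated universal property, and invoking the generalized Verma module $\bar M(A(U))$ with top level $A(U)$; everything else is formal. If one prefers to remain inside the reduction-system framework of this section and not cite Verma modules, the same conclusion can instead be reached by showing directly that every linear relation among the spanning vectors $o(u^{i_1})\cdots o(u^{i_k})\1$ of $N_0$ — which by Lemma \ref{lemmatildenirr1} are precisely the reductions of the weight-zero elements $x(u^i_su^j_mu^k-u^j_mu^i_su^k+[u^i_s,u^j_m]u^k)_t\1$ — corresponds to a relation that already holds among $o(u^{i_1})\cdots o(u^{i_k})$ in $A(U)$; that route is more computational and is where the $C_1$-singular elements enter.
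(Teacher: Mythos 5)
Your proof is correct, and it follows the same underlying idea as the paper's own argument, but it executes it with noticeably more care. The paper's proof is a two-line maximality claim: from the construction, $N$ is the biggest admissible $U$-module with one generator, hence $N_0$ is the biggest $A(U)$-module with one generator, hence $N_0\simeq A(U)$. Your version makes exactly this universality precise: you exhibit the map $\phi(\overline a)=o(a)\1$, get surjectivity from cyclicity of $N$, and get injectivity by using the universal property of $N$ (which does follow from the construction $N=\tilde N/O$, since killing negative weights and imposing the commutator relations is automatic in any admissible $U$-module) mapped onto Zhu's generalized Verma module $\bar M(A(U))$, whose top level is the regular $A(U)$-module; the resulting $\theta$ with $\theta\circ\phi=\id$ settles injectivity. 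The ingredient you import --- the existence of an admissible $U$-module whose degree-zero part is the regular module $A(U)$ --- is precisely what the paper's phrase ``biggest $A(U)$-module with one generator'' silently relies on (without it, the maximal cyclic $A(U)$-module realizable as a top level could a priori be a proper quotient of $A(U)$), so your route buys rigor at the cost of citing that external construction, while the paper's route is shorter but leaves that step implicit. Your alternative suggestion at the end (working inside the reduction system via the analogue of the $\tilde N_{0irr}$ lemma) is closer to how the paper actually uses this lemma in the subsequent proposition, but is not needed for the isomorphism itself.
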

\begin{proof}From our construction of $N$, we can see that $N$ is the biggest admissible $U$-module with one generator. So $N_0$ is the biggest $A(U)$-module with one generator. Hence $N_0\simeq A(U)$ as vector spaces.
\end{proof}
Furthemore we get the following proposition.

\begin{proposition}$A(U)$ is generated by $o(u^1),\cdots, o(u^l)$ with the relations
$$[o(u^i),o(u^j)]=\sum_{k\geq 0}{\wt (u^i)-1\choose k}o(R(i,j,k))$$ and
$$o(u^{i_1}_{n_1}\cdots u^{i_h}_{n_h}(u^i_mu^j_nu^k-u^j_nu^i_mu^k-[u^i_m,u^j_n]u^k)\1),$$ where $0\leq n_1\leq \cdots\leq n_h$.
\end{proposition}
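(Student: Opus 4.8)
The plan is to combine three facts already in hand: that $A(U)$ is generated by $o(u^1),\dots,o(u^l)$ (Proposition~\ref{avgenerator}); that these generators satisfy the commutator relations $(\ref{c1relationforzhu})$; and that $N_0\simeq A(U)$ as vector spaces (the lemma just above). Granting these, all that is left is to identify which further relations, beyond the $C_1$-commutator relations, are needed to present $A(U)$, and to recognise them as the displayed elements.

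First I would build the comparison map. Let $B$ be the associative algebra with generators $X^1,\dots,X^l$ and defining relations $[X^i,X^j]=\sum_{k\geq 0}{\wt u^i-1\choose k}\,\rho_{ijk}$, where $\rho_{ijk}$ is a fixed noncommutative polynomial in the $X$'s representing $o(R(i,j,k))$, obtained once and for all by the rewriting procedure described after Proposition~\ref{avgenerator}. Since $R(i,j,k)$ has formal length smaller than $\wt u^i+\wt u^j$, each relation is formal-length lowering, so — exactly as in the paragraph before Proposition~\ref{au} — $B$ is spanned by the ordered monomials $X^{i_1}\cdots X^{i_k}$ with $i_1\leq\cdots\leq i_k$. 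The assignment $X^i\mapsto o(u^i)$ is an algebra surjection $B\twoheadrightarrow A(U)$, and under the identification $A(U)\simeq N_0$ the ordered monomial $X^{i_1}\cdots X^{i_k}$ goes to $o(u^{i_1})\cdots o(u^{i_k})\1=u^{i_1}_{\wt u^{i_1}-1}\cdots u^{i_k}_{\wt u^{i_k}-1}\1\in N_0$, which is precisely the spanning set of $N_0$ produced in the proof of Proposition~\ref{au}. Hence the kernel of $B\twoheadrightarrow A(U)$ equals the linear span of the relations holding among these vectors in $N_0$.

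Next I would extract those relations from the reduction-system description of $N$. Because $N=\tilde N/O$ is the largest admissible $U$-module generated by one vector, the linear relations among the irreducible vectors of its top level are, by Lemma~\ref{lemmatildenirr} and its weight-zero form Lemma~\ref{lemmatildenirr1}, spanned by the images under reduction of the weight-$0$ elements $x\big(u^i_su^j_mu^k-u^j_mu^i_su^k+[u^i_s,u^j_m]u^k\big)_t\1$ with $x\in A$. Performing those reductions is the same rewriting procedure as above, so each relation becomes a noncommutative polynomial in the $o(u^i)$. I would then normalise it: using skew symmetry $(\ref{skewsym})$ and the commutator formula $(\ref{bracket})$ on the prefix $x$, and Lemma~\ref{lemmacalzhu} to absorb negative modes and the extra mode $t$, one can take $x=u^{i_1}_{n_1}\cdots u^{i_h}_{n_h}$ with $0\leq n_1\leq\cdots\leq n_h$, at which point the relation is, up to a nonzero scalar, $o\big(u^{i_1}_{n_1}\cdots u^{i_h}_{n_h}(u^i_mu^j_nu^k-u^j_nu^i_mu^k-[u^i_m,u^j_n]u^k)\1\big)$; conversely every displayed element arises in this way. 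Note that here $[u^i_m,u^j_n]$ stands for $\sum_{k\geq 0}{m\choose k}R(i,j,k)_{m+n-k}$: as an operator it equals $u^i_mu^j_n-u^j_nu^i_m$, so the bracketed element is genuinely $0$ in the vertex algebra $U$, and hence its image is a relation in $A(U)$; yet after rewriting it through the $C_1$-relations it need not reduce to $0$ using the commutator relations alone, which is exactly the degeneracy. Putting the two steps together, the kernel of $B\twoheadrightarrow A(U)$ is the two-sided ideal generated by the displayed elements, which is the claim.

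The hard part will be the normalisation in the last step: matching the reduction-system relations $x(\cdots)_t\1$ with the closed form in the statement. Concretely, one must show the prefix can always be brought to $u^{i_1}_{n_1}\cdots u^{i_h}_{n_h}$ with $0\leq n_1\leq\cdots\leq n_h$, that the mode $t$ and the truncation by the negative-weight submodule $\bar N'$ do not introduce independent relations, and that the possibly infinite sums coming from the $C_1$-relations cause no difficulty — only finitely many weight-$0$ terms survive on the top level, and the descending chain condition for the formal-length filtration keeps the induction finite. Everything else is bookkeeping layered on the lemmas already established.
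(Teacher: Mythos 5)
Your argument follows the same route as the paper's own (very terse) proof: identify $A(U)$ with $N_0$ via the preceding lemma, invoke Lemma~\ref{lemmatildenirr1} to see that the only relations beyond $(\ref{c1relationforzhu})$ are the reduced images of the weight-zero elements $x(u^i_su^j_mu^k-u^j_mu^i_su^k+[u^i_s,u^j_m]u^k)_t\1$, and then rewrite these as the displayed generators. Your extra scaffolding (the auxiliary algebra $B$ and the explicit normalisation of the prefix via Lemma~\ref{lemmacalzhu}) just makes explicit what the paper compresses into ``reduce elements in (\ref{reduction5}) by the reduction system,'' so the proposal is correct and essentially the paper's proof.
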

\begin{proof} The first half part is obvious. From above lemma, we know that $N_0$ is just the free $A(U)$-module with generator $\1$. So if we reduce elements in (\ref{reduction5}) by the reduction system, we will get the result.
\end{proof}
By above proposition, we can see that for the degenerate universal vertex algebra $U$, the calculation of $A(U)$ is similar to the calculation of $A(V)$, where $V$ is a quotient of a non-degenerate universal vertex algebra, if we treat the $C_1$-sigular elements like the usual singular elements. We use an example to explain the process.
\begin{example}We consider a vertex algebra which has three $C_1$-generators $\alpha, e^\alpha,e^{-\alpha}$. Let $\wt \alpha=1$ and $\wt e^\alpha =\wt e^{-\alpha}=2$. Their $C_1$-relations are given as follows:
\begin{align*}
\alpha_1\alpha&=4\1,\\
\alpha_0e^{\alpha}&=4e^{\alpha},\\
\alpha_0e^{-\alpha}&=-4e^{-\alpha},\\
e^{\alpha}_3e^{-\alpha}&=\1,\\
e^{\alpha}_2e^{-\alpha}&=\alpha,\\
e^{\alpha}_1e^{-\alpha}&=\frac{\alpha_{-2}\1}{2}+\frac{\alpha_{-1}\alpha}{2},\\
e^{\alpha}_0e^{-\alpha}&=\frac{\alpha_{-3}\1}{3}+\frac{\alpha_{-2}\alpha}{2}+\frac{\alpha_{-1}\alpha_{-1}\alpha}{6},
\end{align*}
other relations which we do not list are all $0$. Hence by Theorem \ref{abconva}, we get a universal vertex algebra $V$. By direct calculations, we find all $C_1$-singular vectors as following:
\begin{align*}
e^{\alpha}_1e^{\alpha}_0e^{-\alpha}-e^{\alpha}_0e^{\alpha}_1e^{-\alpha}-[e^{\alpha}_1,e^{\alpha}_0]e^{-\alpha}&=10(\alpha_{-1}e^{\alpha}-e^{\alpha}_{-2}\1),\\
e^{-\alpha}_1e^{-\alpha}_0e^{\alpha}-e^{-\alpha}_0e^{-\alpha}_1e^{\alpha}-[e^{-\alpha}_1,e^{-\alpha}_0]e^{\alpha}&=-10(e^{-\alpha}_{-2}\1+\alpha_{-1}e^{-\alpha}).
\end{align*}
Hence $V$ is degenerate. Let $J=e^{\alpha}_{-2}\1-\alpha_{-1}e^{\alpha}$ and $L=e^{-\alpha}_{-2}\1+\alpha_{-1}e^{-\alpha}.$ By direct calculations, we have the following:
\begin{align}
\alpha _0 J&=4J,\notag \\
\alpha_n J&=0  \,\, {\textrm{for}} \,\,n\geq 1, \notag\\
e^{\alpha}_nJ&=0 \,\, {\textrm{for}} \,\,n\geq 1,\notag\\
e^{-\alpha}_nJ&=0  \,\, {\textrm{for}} \,\,n\geq 1,\notag\\
e^{\alpha}_0J&=4e^{\alpha}_{-1}e^{\alpha},\label{e1}\\
e^{-\alpha}_0J&=-4e^{-\alpha}_{-1}e^{\alpha}-\alpha_{-4}\1+\frac{\alpha_{-2}\alpha_{-2}\1}{2}+\frac{4\alpha_{-3}\alpha}{3}-\alpha_{-2}\alpha_{-1}\alpha+\frac{\alpha_{-1}\alpha_{-1}\alpha_{-1}\alpha}{6},\label{e2}\\
e^{\alpha}_0e^{\alpha}_0J&=0,\notag\\
e^{-\alpha}_0e^{\alpha}_0J&=\frac{20J_{-3}\1}{3}-\frac{5\alpha_{-1}J_{-2}\1}{3}+\frac{14\alpha_{-2}J}{3}+\frac{\alpha_{-1}\alpha_{-1}J}{3},\notag\\
e^{-\alpha}_0e^{-\alpha}_0J&=\frac{20L_{-3}\1}{3}+\frac{50\alpha_{-1}L_{-2}\1}{3}-\frac{80\alpha_{-2}L}{3}+\frac{10\alpha_{-1}\alpha_{-1}L}{3},\notag\\
\alpha _0 L&=-4L,\notag\\
\alpha_n L&=0  \,\, {\textrm{for}} \,\,n\geq 1,\notag\\
e^{\alpha}_nL&=0 \,\, {\textrm{for}} \,\,n\geq 1,\notag\\
e^{-\alpha}_nL&=0 \,\, {\textrm{for}} \,\,n\geq 1,\notag\\
e^{-\alpha}_0L&=4e^{-\alpha}_{-1}e^{-\alpha},\label{e3}\\
e^{\alpha}_0L&=-4e^{\alpha}_{-1}e^{-\alpha}+\alpha_{-4}\1+\frac{\alpha_{-2}\alpha_{-2}\1}{2}+\frac{4\alpha_{-3}\alpha}{3}+\alpha_{-2}\alpha_{-1}\alpha+\frac{\alpha_{-1}\alpha_{-1}\alpha_{-1}\alpha}{6},\label{e4}\\
e^{-\alpha}_0e^{-\alpha}_0L&=0,\notag\\
e^{\alpha}_0e^{-\alpha}_0L&=\frac{20L_{-3}\1}{3}+\frac{5\alpha_{-1}L_{-2}\1}{3}-\frac{14\alpha_{-2}L}{3}+\frac{\alpha_{-1}\alpha_{-1}L}{3},\notag\\
e^{\alpha}_0e^{\alpha}_0L&=\frac{20J_{-3}\1}{3}-\frac{50\alpha_{-1}J_{-2}\1}{3}+\frac{80\alpha_{-2}J}{3}+\frac{10\alpha_{-1}\alpha_{-1}J}{3}.\notag
\end{align}
By Proposition \ref{avgenerator}, $A(V)$ is generated by $o(\alpha),o(e^{\alpha}),o(e^{-\alpha})$ which satisfy the relations caused by commutators:
\begin{align}
[o(\alpha),o(e^{\alpha})]&=4o(e^{\alpha}),\notag\\
[o(\alpha),o(e^{-\alpha})]&=-4o(e^{-\alpha}),\notag\\
[o(e^{\alpha}),o(e^{-\alpha})]&=\frac{o(\alpha)o(\alpha)o(\alpha)-o(\alpha)}{6},\notag
\end{align}
and relations caused by $C_1$-singular vectors (only$(\ref{e1},\ref{e2},\ref{e3},\ref{e4})$ are effective):
$$o(e^{-\alpha})o(\alpha)-2o(e^{-\alpha}),$$
$$o(e^{\alpha})o(\alpha)+2o(e^{\alpha}),$$
$$o(e^{\alpha})o(e^{\alpha}),$$
$$-4o(e^{\alpha})o(e^{-\alpha})-\frac{o(\alpha)}{3}-\frac{o(\alpha)^2}{6}+\frac{o(\alpha)^3}{3}+\frac{o(\alpha)^4}{6},$$
$$-4o(e^{-\alpha})o(e^{\alpha})+\frac{o(\alpha)}{3}-\frac{o(\alpha)^2}{6}-\frac{o(\alpha)^3}{3}+\frac{o(\alpha)^4}{6},$$
$$o(e^{-\alpha})o(e^{-\alpha}).$$
$A(V)$ is an associative algebra with basis $\{\1,o(\alpha),o(e^{\alpha}),o(e^{-\alpha}),o(e^{\alpha})o(e^{-\alpha}),o(\alpha)^2,o(\alpha)^3\}$ and is semi-simple. The generators can be represented as matrices as following,
\begin{flalign*}
o(\alpha)&=\begin{pmatrix}0& 0 & 0 & 0  & 0\\0 & 2 & 0 & 0  & 0  \\0 & 0 & -2 & 0  & 0\\ 0 & 0 & 0 & 1  & 0\\                         0 & 0 & 0 & 0  & -1\end{pmatrix},\\
o(e^\alpha)&=\begin{pmatrix}0& 0 & 0 & 0 & 0\\0&0&1&0&0\\0&0&0&0&0\\0&0&0&0&0\\0&0&0&0&0\end{pmatrix},\\
o(e^{-\alpha})&=\begin{pmatrix}0& 0 & 0 & 0 & 0\\0&0&0&0&0\\0 & 1 & 0 & 0 & 0  \\
                         0 & 0 & 0 & 0 & 0  \\
                         0 & 0 & 0 & 0 & 0
                       \end{pmatrix}.
\end{flalign*}

\end{example}
\begin{remark}This example is in fact the vertex operator algebra associated to one dimensional lattice $\Z\alpha$, such that $<\alpha,\alpha>=4$, whose structures are studied in \cite{Frelepmeu1988} and \cite{Lepli2004}, and its representations are studied in \cite{don1993}, where the author find their modules without using Zhu's algebra.
\end{remark}

Now we consider a quotient $V$ of $U$ by an ideal $I$, $V=A/I$. Usually $I$ is generated by some singular elements $a^1,\cdots, a^k \in U$. The question becomes to find $A(I)$, so like the non-degenerate case, we only need to find $o(u^{i_1}_{n_1}\cdots u^{i_r}_{n_r}a^j)$, where $1\leq i_1,\cdots,i_r\leq l$, $0\leq n_1\leq \cdots \leq n_r$ and $1\leq j\leq k$. In fact we can put all the $C_1$-singular and singular vectors together, and treat them equally and use the same procedure as last two examples to find the relations of $o(u^i)$.

\textbf{Acknowledgement:} The authors thank professor Haisheng Li for introducing us Diamond Lemma. Lu Ding and Wei Zhang thank Morningside Center of Mathematics, Chinese Academy of Sciences for providing excellent research environment. Wei Zhang is supported  by China NSF grant(11101030) and Beijing Higher Education Young Elite Teacher Project. Wei Jiang is Supported by China NSF grant (11271056), the NSF grant (10KJD110001) and Qing Lan Project from Jiangsu Provincial Education Department and the ZJNSF Grant (LQ12A01005).
\par

\bibliographystyle{amsplain}
\bibliography{references}

\end{document}